\newtheorem{theorem}{Theorem}
\newtheorem{lemma}[theorem]{Lemma}
\newtheorem{proposition}[theorem]{Proposition}
\newtheorem{corollary}[theorem]{Corollary}
\theoremstyle{definition}
\newtheorem{definition}[theorem]{Definition}
\newtheorem{remark}[theorem]{Remark}
\newtheorem{example}[theorem]{Example}
\definecolor{MyDarkBlue}{rgb}{0,0.08,0.60}
\newcommand{\A}{{ \rm Aut }}
\renewcommand{\O}{{\mathcal{O}}}
\newcommand{\Z}{{\mathbb{Z}}}
\newcommand{\lf}{\left\lfloor}
\newcommand{\rf}{\right\rfloor}
\renewcommand{\mod}{{\;\rm mod}}
\title[Automorphisms of Curves and Weierstrass semigroups for HKG-covers]{Automorphisms of Curves and Weierstrass semigroups for Harbater-Katz-Gabber covers}
\date{\today}
\author{Sotiris Karanikolopoulos }
\address{Freie Universit\"at Berlin \\ Institut f\"ur Mathematik,
Arnimallee 3,
14195 Berlin, Germany}
\email{skaran@zedat.fu-berlin.de}
\author{Aristides Kontogeorgis}
\address{ National and Kapodistrian University of Athens \\ Department of Mathematics\\
Panepistimioupolis, 15784 Athens, Greece
}
\email{kontogar@math.uoa.gr}
\begin{document}
\bibliographystyle{amsplain}

\begin{abstract}
	We study $p$-group Galois covers $X \rightarrow \mathbb{P}^1$ with only one fully ramified
	point in characteristic $p>0$. These covers are important because of the Harbater--Katz--Gabber compactification theorem of Galois
	actions on complete local rings. The sequence of ramification jumps is related to the
	Weierstrass semigroup of the global cover  at  the stabilized point. We determine explicitly the jumps of the  ramification filtrations
	in terms of pole numbers. We give applications
	for curves with zero $p$-rank: we focus on  curves that admit a big action.
	Moreover we initiate the study of the Galois module structure of polydifferentials.
\end{abstract}
\thanks{{\bf keywords:} Automorphisms, Curves, Numerical Semigroups,
	Harbater-Katz-Gabber covers, zero $p$--rank, Big Actions,  Galois module structure.
	\\{\bf AMS subject classification} 14H37, 14H55, 11G20 (primary) 20M14, 14H10 (secondary).\\
The first author was supported by a Dahlem Research School and Marie Curie  Cofund fellowship;
he is also a member of the SFB 647 project: Space--Time--Matter, Analytic and Geometric Structures.\\
The second author was supported  by the European Union (European Social Fund - ESF) and Greek national
funds through the Operational Program``Education and Lifelong Learning'' of the National Strategic Reference
Framework (NSRF) - Research Funding Program: THALIS}
\maketitle
\section{Introduction}\label{sec:intro}

Let $X$ be a projective nonsingular algebraic curve of genus $g_X\geq 2$ defined over an {algebraically closed} 
field $k$
of characteristic  $p$. For technical reasons we will exclude the characteristics $p=2,3$ from our study. We will denote by $F$ the function field of the curve $X$ and by  $G$  a subgroup of the automorphism group $\A(X)$.

In the literature there is a lot of interest concerning: the properties of the automorphism group, \cite{StiI, StiII, SalvadorBook, hirschfeld2008algebraic}, its size related to several topological invariants of the curve $X$,
\cite{Giulietti2010-rd, Hu:1893, CKK, Ma-Ro, Nak, Rocher2009-jk}, the deformation theory of the couple $(X,G)$ \cite{Be-Me}, lifting problems \cite{CGH08, CGH11, Obus2011-nl, ObusWewers, Pop14}.

An important tool in understanding the automorphism group is the localization of the action by considering the inertia group $G(P)=\{\sigma \in G: \sigma(P)=P\}$, acting on the local ring $\mathcal{O}_P$ at a $k$-rational point $P$. It is well known, see section \ref{irf}, that the group $G(P)$ admits the
following ramification filtration, 
\[ 
	G(P)=G_{-1}(P) { \supseteq}G_0(P)   \supseteq G_1(P) \supseteq G_2(P) \supseteq \cdots \supseteq \{\mathrm{id}\}.
\]
The determination of the ramification filtration, and its \textit{jumps}, i.e.
the indices such that $G_i(P) \gneqq G_{i+1}(P)$ is  a deep problem. For instance if $G_1(P)$ is abelian, then the Hasse--Arf
theorem \cite[Theorem p. 76]{SeL}  puts very strong divisibility relations among the jumps. 
These jumps appear very often
in a variety of cases in the literature in the local and global function fields, especially when one considers arithmetic problems in algebraic function fields; the most immediate application is the computation of the degree of the 
different of a Galois extension of function fields using  
 Hilbert's {celebrated} formula, \cite[chap. III.4]{StiBo}. 

The knowledge of the jumps is  crucial for expressing obstructions to the lifting problem, see \cite{CGH08, CGH11} and is also related to the Artin representation \cite[Chap. VI]{SeL}. 
For local applications 
and their relation with the famous Hasse--Arf theorem see \cite[Chap. IV]{SeL}; for an unexpected application to normal basis generators, the computation of normal bases 
 is another active research area concerning local and finite fields,
 see the work in \cite{Byott, Lara} and the references therein.

In contrast to their significance, we  only know very few things about 
jumps. More precisely, as far as we know,
  they have been computed  explicitly in function fields,  only for some specific cases, see \cite[Examples 1 to 4]{KontoZ} for a non exhaustive list, and for the more general cyclic $p^n$ case see  \cite[lemma 1]{Valentini-MadanZ}. For the  abelian case $\mathbb{Z}/p^i\mathbb{Z}\times \mathbb{Z}/p^i\mathbb{Z}$, they have been only computed for $i=1,2$; for $i=1$ see \cite[section 3]{Sti:09}, \cite[Theorem 3.11]{QuiRe} while for $i=2$ see \cite{kumar}.

Another direction towards  understanding the  automorphism group $G$, is to consider the representation theory of $G$, acting on several naturally defined vector spaces. A natural choice for vector spaces acted on by the automorphism group,  are the  spaces 
\begin{enumerate}
\item[(i)] $H^0(X,\Omega_X^{\otimes m})$, $m\in \mathbb{N}$,  of holomorphic polydifferentials of $X$, and 
\item[(ii)] Riemann-Roch spaces $L(D)$, for some $G$-invariant divisor $D$.
\end{enumerate}
Concerning the first case, the determination of the Galois module structure 
is an interesting problem 
which has been solved in the following cases:
 for unramified Galois covers \cite{Tamagawa:51, Val:82}; for the {semisimple} part, with respect to the Cartier operator, of $H^0(X,\Omega_X)$ for $p$-covers \cite{Salvador:00, Nak:85}; for cyclic and certain elementary abelian covers, \cite{vm, borne06} and \cite{csm} respectively.
 The main tool that was used in some of the  above references and in our work in this article as well, is the construction of an appropriate basis for the holomorphic polydifferentials. This has also rich connections with other subjects in the literature:
the computation of $n$--Weierstrass points, \cite[Theorem 14.2.48]{SalvadorBook}, \cite{Boseck, garciaa-s, garciaelab, WpFF}; the computation of 
the rank of the Hasse--Witt matrix \cite{Madden78}; the classification of curves with  Hasse--Witt matrix of certain rank \cite{CaSa} and the study of the Artin--Schreier (sub)extensions of the rational function field \cite{Val-Mad:80}.

Conserning the second case, when $D=P$ we can define an action of $G(P)$ on the spaces $L(nP)$ for $n\in \mathbb{N}$.
One can ask if there is any 
relationship between
the localized action of $G(P)$ on $\mathcal{O}_P$ and the natural linear representation on the spaces $\mathrm{GL}(L(nP))$. A way to answer this question is by considering the flag of vector spaces $L(nP)$ for $n\in \mathbb{N}$. 
The possible jumps of the dimension sequence of this natural flag lead to the notion
of pole numbers and Weierstrass semigroups, see definition \ref{pole-defi}.

More precisely, the Weierstrass semigroup $H(P) \subset \mathbb{N}$ is a numerical semigroup consisting of all $n\in \mathbb{N}$ such that there is a function $f$ in the function field of the curve $X$ with pole divisor $(f)_\infty=n P$. We will say that the numerical semigroup $H(P)$ has generators $d_1,\ldots, d_r$ if
\[
H(P)= \mathbb{Z}_+ d_1 + \cdots +  \mathbb{Z}_+ d_r,
\]
where $\mathbb{Z}_+:=\{d\in \mathbb{Z}: d \geq 1\}$. 
Each semigroup has a natural partial ordering: for two elements $a$ and $b$ in the semigroup we  say that $a$ is smaller than $b$ if $b=a+c$ for another element $c$ in the semigroup.  The set of minimal {elements} with respect to  this ordering is called a minimal set of generators for the semigroup, see \cite{FGH}.

An extreme example in the theory of numerical semigroups are the symmetric ones. If we limit ourselves
to the Weierstrass semigroups, then symmetric means that the maximum gap equals the largest possible value: $2g_X-1$. Equivalently, see also \cite[eq. (1.1)]{Gorenstein}, this symmetry is expressed by the following rule:
\begin{equation*} 
x\in H(P) \textrm{ if and  only if } 2g_X-1 -x \notin H(P).
\end{equation*}
Symmetric numerical semigroups are closely connected to the geometry of the curve,
see \cite[Section 7.2]{fp}. Moreover  every such semigroup is the Weierstrass semigroup of a  Gorenstein
curve \cite{Gorenstein}. For an introduction to numerical semigroups, and the importance of the symmetric condition we refer to \cite{FGH, NumSep}.

It is known, see proposition \ref{gap-comp},  that the gaps of the ramification filtration of $G(P)$ are related to the semigroup $H(P)$, since
if $G_{i}(P)  >  G_{i+1}(P)$, for $i\geq 1$, then   $ i=m_r-m_\nu$, for some pole number $m_\nu$,
when $m_r$ is the  smallest pole number at $P$ not divisible by the characteristic $p$.

One of our motivations for this study was to find the set of pole numbers which correspond to jumps of the ramification filtration.

In this note, we have set the following aims:
\begin{enumerate}
\item[(I)]
For ``Harbater-Katz-Gabber covers'', or simply HKG-covers, i.e. Galois covers of the projective line with a unique wildly and at most one tamely ramified point, we will characterize exactly the lower ramification jumps in terms of pole numbers at their unique wildly ramified point and give a complete description for its symmetric Weierstrass semigroup.  We remark that we have not made any assumption for $G_1(P)$ to be an abelian group.
\item[(II)]  We will initiate the study of the Galois module structure of spaces of polydifferentials for HKG-covers and give a basis for their $m$-holomorphic polydifferentials. We will prove that HKG-covers arise in a natural way as Galois covers of curves with zero $p$-rank and apply these results to curves equipped with a ``big action'',  showing also that the module of their holomorphic polydifferentials is an indecomposable $k\left[G_1(P)\right]$-module.
\end{enumerate}

\begin{remark}\label{rem:assumptions}
We focus on the jumps of the ramification filtration. The ramification filtration might jump at $-1$, and in this case  $G_{-1}(P)/G_{0}(P)$ is a nontrivial group isomorphic to the Galois group of the corresponding residue field extension. 
Moreover, we might have a jump at  $0$ if and only if there is tame ramification, since $G_{0}(P)/G_{1}(P)$ equals the tame ramification degree. The crucial information regarding all the other higher order ramification jumps, lie on the  $p$-part of $G$; this is the reason why we assume that our field is algebraically closed and we restrict ourselves to the $p$-part of the ramification filtration, i.e. to $G_{1}(P)$. Thus $G_{-1}(P)= G_0(P)=G_1(P)$. 

Although it seems possible to extend all of our results  over perfect,  instead of  algebraically closed base fields,  there are certain places that have to be treated with some extra attention. 
In the following proofs, we have used explicitly the  fact that  $k$ is an algebraically closed field: in the proof of proposition \ref{action}, and in the proofs of theorem \ref{jumps} \& corollary \ref{case1jumps} respectively. In the latter cases we use \cite[Prop III.7.10]{StiBo}, which requires certain polynomials to have all their roots in $k$.
\end{remark}

It is clear that the group $G_1(P)$ acts on the vector spaces $L(m_iP)$
for each $i\in \mathbb{N}$, defining representations
\begin{equation} \label{rhoidef}
\rho_i: G_1(P) \rightarrow \mathrm{GL}\big(L(m_iP)\big).	
\end{equation}
The second author proved that all, but a finite number of these representations are faithful.
\begin{proposition} \cite[Lemmata  2.1, 2.2]{KontoZ} \label{fai-rep-lemma}
	If $g_X\geq 2$ and $p\neq 2,3$, then there is at least
	one pole number $m_r \leq 2g_X-1$ not divisible by the characteristic $p$.
	Then there is  a faithful representation
	\[
		\rho: G_1(P) \rightarrow \mathrm{GL} \big( L(m_rP) \big),
	\]
	where $m_r$ is the smallest pole number not divisible by the characteristic.
\end{proposition}

\begin{remark}\label{rem:non-faith-tame}
Observe that for a general decomposition group with tame ramification, the above defined representation might not be faithful.
\end{remark}

The above proposition \ref{fai-rep-lemma} is the starting point for defining a new filtration of $G_1(P)$, which we will call the ramification filtration. More precisely, the $i$-th group is just the kernel of the linear representation $\rho_i$ defined in eq. (\ref{rhoidef}).
We refer to section \ref{imr}, for a more detailed  definition. The set of jumps of the representation filtration,
are easier to understand since their definition  is based on representations of the general linear group.
Our study of the jumps of the ramification filtration, with the aid of the representation filtration and the Weierstrass semigroup theory, gives a complete description for {them}.

In general the  ramification filtration can be introduced and studied in terms of general local rings, see \cite{SeL}. In  the case of spectra $\O$ of local rings of the form $k[[t]]$
acted on by a group $G_0$, where $k$ is a perfect 
field of characteristic $p>0$, we can pass from the local case to the global one with the Harbater-Katz-Gabber covers, see definition \ref{HKG-cover-def}. These covers can be seen as a minimal compactification of a local action and there is a lot of interest in them, for instance  they appear in deformation and lifting problems \cite{BCPS, CGH08, CGH11}. 

By considering the Harbater--Katz--Gabber compactification of an action on the
local ring $k[[t]]$, we have the advantage of being able to attach global invariants,
like genus, $p$-rank, differentials etc. to the local case.
Also finite subgroups of the  automorphism
group $\A k[[t]]$, a subject difficult to understand, but crucial for studying deformation theory
of curves with automorphisms \cite{Be-Me},
become  subgroups of $\mathrm{GL}(V)$ for a finite dimensional vector space $V$.






We would like to point out that in the case of Riemann surfaces such a relation among the group $G(P)$ and the Weierstrass
semigroup at $P$ is known.  I. Morisson and H. Pinkham \cite{MorrisonPinkham} studied this connection in
characteristic zero  for \textit{Galois Weierstrass points}: a point $P$ on a compact Riemann surface $Y$ is called Galois Weierstrass if
for  a meromorphic
function $f$  on $Y$ such that $(f)_\infty=dP$, where $d$ is the least pole number
in the Weierstrass semigroup at $P$, the function $f:Y\rightarrow \mathbb{P}^1(\mathbb{C})$ gives rise to
a Galois cover.  This article can be seen as a natural
generalization of some results in that article in positive characteristic. Notice that {in the latter case,}
the first non zero element in $H(P)$ is not enough to grasp the group structure.
We have to go up to the first pole number in $H(P)$ that is not divisible by $p$ to do so.
And of course the stabilizer $G(P)$  and its $p$-part $G_1(P)$  do not have to be cyclic groups anymore.


Our motivation for studying actions on HKG-covers was the deformation theory of curves with automorphisms.
J. Bertin and A. M\'ezard in \cite{Be-Me} proved  a  local global principle that can be used to show that
the ``difficult part'' of the study of the deformation functor of curves with automorphisms resides
in the local deformation functors. 

This is a too vast object of study to describe here; the reader is advised
to look
at \cite{Be-Me} for more information. Local actions can be compactified to HKG-covers,
and at least the dimension of the tangent space of the deformation functor is reflected by the
space of $2$-holomorphic differentials $H^0(X,\Omega_X^{\otimes 2})$
of the corresponding HKG-cover. Indeed, the second
author in  \cite{KoJPAA06} related the dimension  of the space of coinvariants of global sections of 2-polydifferentials $\dim  H^0(X,\Omega_X^{\otimes 2}) _G$,  
to the dimension of the tangent space of the deformation functor of curves with automorphisms. This computation is  a complicated task and reserves further study. 



The structure of the article is as follows:
In section \ref{DefinitionMR} we review some basic notions for the ramification filtration, see section \ref{irf}, and the Weierstrass semigroup
at a fixed point of our curve $X$, see section \ref{iws}. After that, we see how these two notions are related in section \ref{irrs} and focus 
on the HKG-covers, where the Galois group is not necessarily an abelian group. We
finally define the representation filtration and give all the necessary background in order to state our two main results in section \ref{imr}. Section \ref{section-2} provides some information concerning the Weierstrass semigroup at a totally ramified point for a general Galois cover. Section \ref{mainsection} is the heart of this note providing the proofs for the computation of the ramification jumps (in upper and lower numbering). Section \ref{apply} is devoted to applications of our main results: we mainly focus on curves with big-actions, see  section \ref{bigactionsap}.  
These curves, like any other HKG-cover, are curves with zero $p$-rank,
 see section \ref{zeroprankapp}.
Finally in section \ref{HasseArfapp} we interpret the Hasse-Arf theorem  in terms of our results. 
In section \ref{represent-KG} we provide a basis for holomorphic polydifferentials. This will characterize all the Weiertrass semigroups that we have previously computed as symmetric; on the other hand this will also be the starting point for studying the Galois module structure.

\subsubsection*{Acknowledgement:} 
We would like  to thank the anonymous referee for many useful comments.  
The current form of the article owes a great deal to the referee's constructive criticism and detailed suggestions.

\section{Definitions and main results}
\label{DefinitionMR}
\subsection{Ramification filtration}\label{irf}
%
%
%
%
Let $\O_P$ be the completed local ring at the $k$-rational point  $P$ and let $m_P$ be its maximal ideal.
 The subgroup $G_i(P)\subset G(P)$ is defined
as the subgroup of $\sigma \in G(P)$ which act trivially on $\mathcal{O}_{P}/m_{P}^{i+1}$.
The groups $G_{i}(P)$ form a filtration:
\[ 
	G(P)=G_{-1}(P){ \supseteq}G_0(P)   \supseteq G_1(P) \supseteq G_2(P) \supseteq \cdots \supseteq \{\mathrm{id}\}.
\]
It is known that $G_1(P)$ is the $p$-part of $G(P)$, $G_0(P)/G_1(P)$ is a cyclic group of order prime to $p$, while for $i\geq 1$, the quotients $G_{i}/G_{i+1}$ are elementary abelian groups. 
The quotient $G(P)/G_{0}(P)$ is isomorphic to the Galois group
$\mathrm{Gal}\left(\frac{\mathcal{O}_P}{P}/\frac{\mathcal{O}^G_P}{\mathcal{O}^G_P \cap P}\right)$. This latter group is trivial if $k$ is algebraically closed. By remark \ref{rem:assumptions} we will restrict ourselves to the study of the jumps of the $p$-part $G_{1}(P)$.

Let us fix the notation for the jumps:
\begin{equation}\label{ramfiltration}
	G_0(P) =G_1(P)=G_{b_1}>G_{b_2}>\cdots > G_{b_\mu}>\{\textrm{id} \}.
\end{equation}
This means that $G_{b_\nu} \varsupsetneq G_{b_\nu+1}=G_{b_{\nu+1}}$ for every
$1 \leq \nu \leq \mu$ and that there are $\mu$ jumps.

The theory of ramification filtrations can be considered more generally for complete discrete valuation rings see \cite[chap. IV]{SeL}. We will see in section \ref{section-HKG}, that such local actions on rings $k[[t]]$ can always come from actions on curves. 


\subsection{Weierstrass semigroups}\label{iws}

 Consider the flag of vector spaces
\[ 
	k=L(0)=L(P)= \cdots  =L((i-1)P) < L( iP) \leq  \cdots \leq  L( (2g_X-1)P ),
\]
where
\[
	L(iP):=\{ f \in F: {\rm div}(f) +iP \geq 0 \} \cup \{0\}.
\]
We will write $\ell( D)= \dim_k L(D)$, for a divisor $D$.
\begin{definition} \label{pole-defi}
	An integer $i$ will be called  a pole number if there is a function
	$f \in F^*$ so that $(f)_\infty=iP$ or
	equivalently $\ell\big( (i-1)P \big)+1=\ell\big(i P\big)$. If $i$ is not a pole number, we call it a gap.
	The set of pole numbers at $P$  form a numerical semigroup $H(P)$ which is called the
	Weierstrass semigroup at $P$.
\end{definition}
Note that $0$ is always a pole number; thus from now on when
we write $H(P)$ we always assume that $\{0\}\in H(P)$ for every Weierstrass semigroup.
It is  known that there are exactly $g_X$ pole numbers that are smaller than or equal to $2g_X-1$ and that every
integer $i \geq 2g_X$  is in the Weierstrass semigroup, see \cite[I.6.7]{StiBo}. 

%
%
\subsection{Action on Riemann-Roch spaces}\label{irrs}

\begin{definition}\label{fi}Let $m_r$ be the smallest pole number at $P$ not divisible by $p$. Denote
	by \[0=m_0< \cdots < m_{r-1} < m_r \]    all the pole numbers at $P$ in increasing sequence which  are
	$ \leq m_r$.
	From now on, $f_i \in F$, {for} $0\leq i \leq r$, will denote a selection of a function such that $(f_i)_\infty=m_iP$.
\end{definition}

\begin{remark}\label{nonuniq}
	Observe that a function which  has a unique pole at $P$ of  order $m_i$ is not unique.
	If $f_i,f_i'$ are two functions such that
	$(f_i)_\infty=(f_i')_\infty=m_iP$, then by examining the Laurent expansion
	of $f_i,f_i'$, there is constant $C\in k^*$ such that:
	\[
		f_i'=C f_i + g,
	\]
	where $g$ is a function in $L(m_{i-1} P)$.
\end{remark}

Concerning the jumps of the ramification filtration we have the following characterization:
\begin{proposition} \cite[prop. 2.3]{KontoZ}\label{gap-comp}
	Let $X$ be a curve acted on by the group $G$. For every fixed point $P$ on $X$ we consider  the corresponding  faithful
	representation  defined  in  proposition \ref{fai-rep-lemma}:
	\[\rho: G_1(P) \rightarrow \mathrm{GL}_{\ell(m_r P)} (k).\]
	If $G_{i}(P)  >  G_{i+1}(P)$, for $i\geq 1$, then   $ i=m_r-m_\nu$,
	for some pole number $m_\nu$.
\end{proposition}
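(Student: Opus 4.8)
The plan is to reduce everything to a single local computation at $P$ and then read off the pole order of $\sigma(f_r)-f_r$ for a generator $\sigma$ of the jump. For each pole number $m_k\le m_r$ fix a function $f_k\in F$ with $(f_k)_\infty=m_kP$, so that $\{f_0=1,f_1,\dots,f_r\}$ is a basis of $L(m_rP)$ with $v_P(f_k)=-m_k$. Since every $\sigma\in G(P)$ fixes $P$, it preserves pole orders at $P$; hence $\rho(\sigma)$ respects the flag $L(m_0P)\subset\cdots\subset L(m_rP)$ and is upper triangular in this basis. I would then work in the completion of $F$ at $P$, i.e. in $k((t))$ with $t$ a local uniformizer, and study $\sigma(f)-f$ there.

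First I would carry out the local estimate. Fix $\sigma\in G_i(P)\setminus G_{i+1}(P)$ with $i\ge 1$; by definition of the filtration $v_P(\sigma(t)-t)=i+1$, so $\sigma(t)=t+at^{i+1}+\cdots$ with $a\neq 0$, i.e. $\sigma(t)/t=1+at^{i}+\cdots$. Writing a function $f$ with $v_P(f)=-m$ as $f=t^{-m}w$ with $w$ a unit and expanding $\sigma(f)=\sigma(t)^{-m}\sigma(w)$, the dominant contribution comes from $\sigma(t)^{-m}=t^{-m}(1-m\,a\,t^{i}+\cdots)$, since $v_P(\sigma(w)-w)\ge i+1$. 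This yields
\[
\sigma(f)-f=-m\,a\,w(P)\,t^{\,i-m}+(\text{higher order}).
\]
Thus $v_P(\sigma(f)-f)\ge v_P(f)+i$ always, with equality $v_P(\sigma(f)-f)=i-m$ exactly when $p\nmid m$. This is the crux of the argument, and the hypothesis $p\nmid m_r$ supplied by Proposition \ref{fai-rep-lemma} is precisely what makes the estimate sharp for the top basis vector.

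Next I would apply this to $f_r$. Because $p\nmid m_r$, the difference $\sigma(f_r)-f_r$ has a pole at $P$ of exact order $m_r-i$. Moreover both $\sigma(f_r)$ and $f_r$ lie in $L(m_rP)$, and since $\sigma$ fixes $P$ while $f_r$ has its only pole at $P$, the difference has no poles away from $P$. Therefore $(\sigma(f_r)-f_r)_\infty=(m_r-i)P$, which is exactly the statement that $m_r-i$ is a pole number; writing $m_r-i=m_k$ gives $i=m_r-m_k$. For $1\le i<m_r$ the difference is a genuine nonzero element (its valuation at $P$ is negative), so this is automatic; for $i=m_r$ the difference is a nonzero constant and $m_r-i=0=m_0$ is still a pole number.

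It remains to rule out jumps at $i>m_r$, and here I would invoke the faithfulness of $\rho$. If $i>m_r$ and $G_i(P)\supsetneq G_{i+1}(P)$, choose $\sigma\in G_i(P)\setminus G_{i+1}(P)$ with $\sigma\neq\mathrm{id}$. For every basis vector the estimate gives $v_P(\sigma(f_k)-f_k)\ge i-m_k>0$, so each $\sigma(f_k)-f_k$ is regular at $P$ and has poles only at $P$, hence is a constant vanishing at $P$, i.e. $0$. Thus $\rho(\sigma)=\mathrm{Id}$, contradicting faithfulness of $\rho$ on $G_1(P)\supseteq G_i(P)$. Consequently every jump satisfies $1\le i\le m_r$ and has the form $i=m_r-m_k$ for a pole number $m_k$. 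The single delicate point throughout is the local expansion, and in particular guaranteeing that the leading coefficient $-m\,a\,w(P)$ is nonzero, which is exactly where $p\nmid m_r$ is used.
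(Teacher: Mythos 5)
Your proof is correct and follows essentially the same route as the source: the sharp valuation estimate $v_P(\sigma f - f) = v_P(f) + i$ for $p \nmid v_P(f)$, which you derive by expanding $\sigma(t) = t + at^{i+1} + \cdots$ and isolating the leading term $-m\,a\,w(P)\,t^{i-m}$, is exactly lemma \ref{HKT} (i.e.\ \cite[Lemma 11.83]{Korchm}), and the paper's argument (quoted from \cite{KontoZ}) applies it to $f_r \in L(m_rP)$ just as you do, concluding that $\sigma(f_r)-f_r$ has pole divisor $(m_r-i)P$ so that $m_r-i$ is a pole number. Your final step ruling out jumps $i > m_r$ via faithfulness of $\rho$ (every $\sigma(f_k)-f_k$ becomes a constant vanishing at $P$, hence zero) is the same standard completion, so there are no gaps.
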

Since we characterize exactly the jumps and the structure of the Weierstrass semigroups at the unique ramified point of a HKG-cover, we also characterize exactly the set of pole numbers $m_{\nu}$ for $\nu< r$, such that $m_{r}-m_{\nu}$ is a jump.
\subsection{Representation Filtration}\label{imr}
{Recall that \[0=m_0< \cdots < m_{r-1} < m_r \]  are all the pole numbers at $P$ in increasing sequence up to $m_r$.}
\begin{definition}\label{repfil}
	For each $ 0 \leq i \leq r$ we consider the representations
	\[
		\rho_i :G_1(P) \rightarrow \mathrm{GL}(L(m_i P)).
	\]
	We form the decreasing sequence of groups:
	\begin{equation} \label{rep-filt}
		G_1(P)=\mathrm{ker} \rho_0 \supseteq \mathrm{ker} \rho_1 \supseteq \mathrm{ker} \rho_2
		\supseteq \cdots \supseteq
		\mathrm{ker} \rho_r=\{1\}.
	\end{equation}
	We will call this sequence of groups {\em the representation filtration}.
\end{definition}
\begin{remark}
The $i$-th ramification group is the kernel of the map:
\[
   \phi_i: G_0(P) \rightarrow \mathrm{Aut}\left(\O_p/ m_P^{i+1} \right)
\]
while the $i$-th representation group is the kernel of the map 
\[
   \rho_i: G_0(P) \rightarrow \mathrm{GL}\big( L(m_i P) \big),
\]
where $L(m_i P)$ can also be seen as a quotient of $L(m_r P)=\Big(L(m_i P) \oplus W \Big)/W$, where $W$ is the vector space complement of $L(m_i P)$ in $L(m_rP)$. 
\end{remark}

{ Note that the spaces $L(m_i P)$ are fixed by the action of $G_1(P)$}. The filtration of {eq. (\ref{rep-filt})} leads to a successive sequence of elementary abelian $p$-group extensions of the field $F^{G_1(P)}$:
\begin{equation} \label{F-seq}
	F^{G_1(P)}=F^{\mathrm{ker} \rho_0} \subseteq F^{\mathrm{ker} \rho_1} \subseteq F^{\mathrm{ker} \rho_2}
	\subseteq \cdots \subseteq
	F^{\mathrm{ker} \rho_r}=F.
\end{equation}
We call an index $i$ a jump of the representation filtration if and only if $\ker \rho_i\gneq \ker \rho_{i+1}$.
Let us also fix the notation for the representation jumps:
\[
	G_1(P)=\ker \rho_0 = \cdots= \ker \rho_{c_1} > \cdots >\ker \rho_{c_{n-1}}>\ker \rho_{c_{n}}>\{\textrm{id} \}.
\]
In other words, the above sequence jumps at n integers.
These integers will be called  jumps of the representation
filtration,
\begin{equation} \label{rep-jumps}
  c_1<c_2< \cdots < c_{n-1} < c_{n}=r-1.
\end{equation}
The last equality $c_{n}=r-1$  comes from the faithful representation of proposition \ref{fai-rep-lemma}, since $\ker \rho_r=\{1\}$, coupled with 
 lemma \ref{semigroupup} {which will be proved later}.  Notice that $c_i \in \{1, \ldots, r\}$ for all $1\leq i \leq n$.
\begin{remark}\label{structureker}
	Every element $\sigma\in \mathrm{ker}{\rho_i}$ fixes by definition all $f_\nu$ such that $(f_\nu)_\infty=m_\nu P$
	for $\nu\leq i$. A nonnegative integer $i$ is a jump whenever the function $f_{i+1}$ is not  $\ker \rho_i$ invariant.
\end{remark}

{We will prove in proposition \ref{jumpsrho-gen} that if}  $c_i$ is a representation jump then $m_{c_i+1}$ is a minimal generator of $H(P)$. { At every jump of the sequence of the } groups $\ker \rho_{c_i}$, the corresponding sequence of fields will
also jump and moreover
\begin{equation}\label{N-1}
	F^{\ker \rho_{c_{i+1}}} = F^{\ker \rho_{c_{i}}}(f_{c_i+1}).
\end{equation}
\begin{definition}
	In order to simplify notation we set $F_i=F^{\ker{\rho_{c_i}}}$, $\bar{m}_i=m_{c_i+1}$ and $\bar{f}_i=f_{c_i+1}$. Denote also by $p^{h_i}=|\ker \rho_{c_{i+1}}|$,  for all $1 \leq i \leq n-1$,
	and $p^{h_0}=G_1(P)$.
\end{definition}
Thus eq. (\ref{N-1}) can be written as
\[F_{i+1}=F_i(\bar{f}_i).\]
{We will prove in lemma \ref{semigroupup} that}
 in every extension we
add an extra function $f_{c_i+1}=\bar{f_i}$. 
Define
$Q_i=F_i \cap P$ for $1 \leq i \leq n+1$ to be the \textit{unique} ramification point of the tower defined in eq. (\ref{F-seq}). At the level of the Weierstrass semigroups, the field generator $\bar{f}_i$ adds a new generator $\bar{m}_i$ in the image of the 
 {semigroup} $H(Q_i)$ on $H(Q_{i+1})$.  In section \ref{section-2} in lemma \ref{up-down}, we will see how the Weierstrass semigroups at the ramified points of a Galois extension of fields are related.
\maketitle
Using
this relation,  
the semigroup of  $F_2$ at $Q_{2}$  is
\[H(Q_2)=\left| \frac{\mathrm{ker} \rho_{c_1}}{\mathrm{ker} \rho_{c_2}} \right| \Z_+ +  \lambda_1 \Z_+ = { p^{h_0-h_{1}}} \Z_+ + \lambda_1 \Z_+\]
with
$(\lambda_1,p)=1$.

Notice that $\lambda_1=1$ if and only if $F^{\mathrm{ker} \rho_{c_2}}$  is rational. We proceed in this way to obtain
\[
	H(Q_{i+1})={ p^{h_{i-1}-h_{i}}}H(Q_{i}) + \lambda_i \Z_+, \textrm{ for all } 1 \leq i \leq n,
\]
where $(\lambda_i,p)=1$.
We will see in proposition \ref{jumpsrho-gen}
that the elements
\[
	p^{h_1} \lambda_1 <   p^{h_2}\lambda_2 < \cdots < p^{h_{n-1}}\lambda_{n-1}<  \lambda_{n}=\frac{m_{c_n+1}}{|\ker \rho_{c_{n+1}}|}=m_r,
\]
are inside the set of  generators of the Weierstrass semigroup  at $P$. If we add the element
$p^{h_0}$ then, proposition \ref{weiersemi} will give:
\[
	\langle p^{h_0},p^{h_1}\lambda_1, \ldots,p^{h_{n-1}}\lambda_{n-1},\lambda_{n}  \rangle_{\mathbb{Z}_+} = H(P).
\]
We have the following picture of fields, groups, places and semigroups:
\[
	\xymatrix{
		F \ar@{-}[d] & \{1 \} \ar@{-}[d] & P \ar@{-}[d]   & H(P) \ar@{-}[d]
		\\
		F_{i+1}=F^{\ker \rho_{c_{i+1}}} \ar@{-}[d] & {\ker \rho_{c_{i+1}}} \ar@{-}[d] & Q_{i+1} \ar@{-}[d]  &  H(Q_{i+1}) =
		\left\langle { p^{h_{i-1}-h_{i}}} H(Q_{i}),    {\lambda_i } \right\rangle_{\mathbb{Z}_+} \ar@{-}[d]
		\\
		F_{i}=F^{\ker \rho_{c_{i}}} \ar@{-}[d] & \ker \rho_{c_{i}}  \ar@{-}[d]  & Q_{i} \ar@{-}[d]  &  H(Q_{i})     \ar@{-}[d]
		\\
		F_0=F^{G_1(P)} & G_1(P)  &  {Q_1}   &  \mathbb{Z}_+
	}
\]


%
%
%
\subsection{Harbater-Katz-Gabber covers}
\label{section-HKG}

\begin{definition}\label{HKG-cover-def}
	A Harbater-Katz-Gabber cover is a Galois  cover $X_{\mathrm{HGK}}\rightarrow \mathbb{P}^1$, such that there are at most two $k$-rational points $p_{1},p_{2} \in \mathbb{P}^{1}$, such that $p_{1}$ is tamely ramified and $p_{2}$ is fully ramified. All other geometric points of $\mathbb{P}^{1}$ remain unramified. In this article we are interested in  $p$-groups, so for us  HKG-covers have a unique ramified point.
\end{definition}


So far we have started with a subgroup of $\A (X)$ that is the isotropy group $G(P)$ of a fixed point $P$ of $X$. On the other hand, in section \ref{apply} we will see in theorem \ref{zeroprankGK}, that $X_{\mathrm{HKG}}$ has zero $p$-rank and thus every $p$-subgroup $G$ of $\A(X_{\mathrm{HKG}})$ can be realized as the stabilizer of a unique point $P$, see \cite[paragraph 11.13]{hirschfeld2008algebraic}, thus $G=G(P)$. 



The Harbater--Katz--Gabber compactification theorem \cite{Harbater:80}, \cite[th. 1.4.1]{KaGa}, {for the case of $p$-groups} asserts that there is a HKG-cover
$X_{\mathrm{HKG}}\rightarrow \mathbb{P}^1$ ramified only at one point $P$, with Galois group
$G=\mathrm{Gal}(X_{\mathrm{HKG}}/\mathbb{P}^1)=G_0$ such that $G_0(P)=G_0$, and the action of
$G_0$ on the completed local ring $\hat{\O}_{X_{\mathrm{HKG}},P}$ coincides with the original
action of $G_0$ on $\O$.

For the case of HKG-covers, we will show in corollary \ref{case1jumps} that the subset of minimal generators $\bar{m}_{1},\ldots,\bar{m}_n$ of the Weierstrass semigroup described in  Proposition \ref{jumpsrho-gen}, is the whole set of minimal generators unless $G_{1}(P)=G_{2}(P)$. In the latter case, we will prove in proposition \ref{gen-chara}, that we have also to add  $|G_{1}(P)|$, 
in order to obtain the full set of minimal generators of the semigroup. 
In proposition \ref{action} we will describe the action of the Galois group on the generators of the tower of the fixed fields by the kernels; this will be a fundamental step for the computation of the jumps that will be given in theorem \ref{jumps}.
We will also prove in corollary \ref{sameorder}, that the representation and ramification filtrations coincide. By these two results, the jumps of the ramification filtration are completely determined.
A basis of holomorphic polydifferentials will be given in proposition \ref{deg-char}; this will help us to derive some useful information  for their Galois module structure in proposition \ref{KGmodule}.
Finally, this basis of holomorphic polydifferentials also proves in corollary \ref{symmetricWeier}, that the Weierstrass semigroup at the ramified point is symmetric.

\subsection{Main results}

Now we are ready to state our two main theorems:
{Concerning the structure of  $H(P)$, the Weierstrass semigroup at $P$,  we have the following}
\begin{theorem}\label{mainresweier}
	\begin{enumerate}
		\item\label{b} For every jump of the representation filtration
		$c_i$, $1 \leq i \leq n$ there exists a generator of $H(P)$ of the form  $\bar{m}_i=m_{c_i+1}=p^{h_i} \lambda_i$,
		where $(\lambda_i,p)=1$.
		\item\label{d} {The first ramification jump affects the structure of $H(P)$ in the following way:}
		\begin{enumerate}
			\item\label{d1} If $G_1(P)> G_2(P)$, then  the extension $F/F^{G_2(P)}$ is also HKG,
			and the Weierstrass semigroup $H(P)$ is minimally generated by ${\bar{m}_i}$, with $1\leq i \leq n$.
			Moreover $|G_2(P)|={\bar{m}_1}=m_1$.
			\item\label{d2} If $G_1(P)=G_2(P)$ then we need $\bar{m}_i$,  $1 \leq i \leq n$ together with $p^{h_0}=|G_1(P)|$  in order to generate $H(P)$.
			In this case $|G_1(P)| \neq \bar{m}_{i}$ for all $1 \leq i \leq n$.
		\end{enumerate}
		In both cases the semigroup  $H(P)$ is symmetric.
	\end{enumerate}
\end{theorem}

\begin{proof}
	{Part (\ref{b}) will be proved in  proposition \ref{jumpsrho-gen};  Part \ref{d1}  will be proved in  corollary \ref{case1jumps}, while part  
	 (\ref{d2}) will be proved in  proposition \ref{gen-chara} and lemma \ref{weirgenbutnotrepjump}. Finally, the assertion about the symmetric Weierstrass semigroup will follow  from corollary \ref{symmetricWeier}.}
\end{proof}


The relationship between the representation and the ramification filtrations is given in terms of the following:

\begin{theorem} \label{mainres}
	Assume that $X \rightarrow X/{G_1(P)} =\mathbb{P}^1$  is a HKG-cover.
	Then
	\begin{enumerate}
		\item\label{a} The jumps of the ramification filtration are the integers $\lambda_i$ for  $1\leq i\leq n$, i.e.  $\lambda_{i}=b_i$ for
		every such $i$, while the number of ramification and representation jumps coincide, i.e. $\mu=n$.
		\item\label{c} $G_{b_i}=\ker \rho_{c_{i}}$  for all   $2\leq i \leq n$.
	\end{enumerate}
\end{theorem}

\begin{proof}
	{ Part (\ref{a}) will be proved in  theorem \ref{jumps}, while 
	(\ref{c}) will be proved in  corollary \ref{sameorder}. }
\end{proof}

\begin{remark}In view of remark \ref{rem:assumptions}, let us assume 
that we have a HKG-cover which is also tamely ramified.
Since $G_1(P) \lhd G_0(P)$ we have the following picture of curves, function fields and ramified places:
\[
\xymatrix{
	X \ar[d]  & F \ar@{-}[d] & P \ar@{-}[d] & \\
	\mathbb{P}^1 \cong X/G_1(P) \ar^{C_n}[d] & F^{G_1(P)} \ar@{-}[d]  
	& Q \ar@{-}[d] & Q'  \ar@{-}[d] \\
	\mathbb{P}^1 \cong X/G_0(P)  &  F^{G_0(P)} & q & q' 
}
\]
Keep in mind that $G_0(P)$ is the semisimple product of the cyclic $G_0(P)/G_1(P)$ and $G_1(P)$. Now, the lower ramification jumps are at 0, while the rest of them are given by theorem \ref{mainres}. 
\end{remark}

\section{ Totally ramified Galois covers }
\label{section-2}
%
%
%
%
%
%
\label{Jumps}
We begin our study by relating the Weierstrass semigroups at totally ramified 
 points 
 of Galois covers over  {algebraically closed} fields in positive characteristic.
We remark that the results obtained in this section are not limited to $p$-groups.
Consider a Galois cover $\pi:X\rightarrow Y=X/G$ of algebraic curves,
and let $P$ be a fully ramified $k$-rational point of $X$. How are the Weierstrass semigroup
sequences of $P$, and $\pi(P)$ related?
\begin{lemma} \label{up-down}
	Let $F(X)$, $F(Y)=F(X)^G$
	denote the function fields of the curves $X$ and
	$Y$ respectively. The morphisms
	\[
		N_G:F(X) \rightarrow F(Y) \mbox{ and } \pi^*:F(Y) \rightarrow F(X),
	\]
	sending $f\in F(X)$ to $N_G(f)=\prod_{\sigma_\in G} \sigma f$  and $g\in F(Y)$
	to $\pi^* g\in F(X)$ respectively, induce
	injections
	\[
		N_G: H(P) \rightarrow H(Q)  \mbox{ and } \pi^*:H(Q) \stackrel{\times |G|}\longrightarrow H(P),
	\]
	where $Q:=\pi (P)$.
\end{lemma}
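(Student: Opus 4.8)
The plan is to treat the two maps separately, in each case tracking how the relevant valuation transforms under the map and checking that a function with a single pole is carried to a function with a single pole. The geometric input I will use throughout is that $P$ is fully ramified: $\pi^{-1}(Q)=\{P\}$ with $e(P|Q)=|G|$, and every $\sigma \in G$ fixes $P$. The arithmetic input is that $k$ is algebraically closed, so the residue degree $f(P|Q)$ equals $1$.

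First I would analyze $\pi^*$. Given $m \in H(Q)$, I choose $g \in F(Y)^*$ with $(g)_\infty = mQ$. Because $P$ is the only place of $F(X)$ lying over $Q$, the pulled-back function $\pi^* g$ has its only pole at $P$, and the ramification formula gives $v_P(\pi^* g) = e(P|Q)\, v_Q(g) = -|G|m$. Hence $(\pi^* g)_\infty = |G|m\cdot P$, so $|G|m \in H(P)$; this realizes the stated map $m \mapsto |G|m$, which is visibly injective.

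Second, for $N_G$, I take $n \in H(P)$ and a function $f$ with $(f)_\infty = nP$. Since $\sigma(P)=P$ for all $\sigma \in G$, each conjugate $\sigma f$ again has its only pole at $P$, so the product $N_G(f) = \prod_{\sigma \in G} \sigma f$ (a nonzero element of $F(X)^G = F(Y)$) has its only pole at $Q$. To pin down the exact pole order I would compute the valuation in two ways: inside $F(X)$ one has $v_P(N_G f) = \sum_{\sigma} v_P(\sigma f) = |G|\, v_P(f)$, each summand equaling $v_P(f)$ because $\sigma$ fixes $P$; on the other hand $v_P(N_G f) = e(P|Q)\, v_Q(N_G f) = |G|\, v_Q(N_G f)$ since $N_G f$ lies in $F(Y)$. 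Comparing the two gives $v_Q(N_G f) = v_P(f) = -n$, which is exactly the value predicted by the norm–valuation identity $v_Q(N_G f) = f(P|Q)\, v_P(f)$ with $f(P|Q)=1$. Thus $n \in H(Q)$, and the induced map $H(P) \to H(Q)$ is the identity on integers, hence injective.

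The two valuation computations are routine; the point genuinely requiring the hypotheses is the uniqueness of the pole both upstairs and downstairs, and I expect the step deserving the most care to be verifying that $N_G(f)$ acquires no spurious poles away from $Q$ and that its pole order at $Q$ is exactly $n$ rather than merely bounded by it. Both facts rest on total ramification: the stabilization of $P$ by $G$ keeps every $\sigma f$ poled only at $P$, and the uniqueness of $P$ over $Q$ forces $\pi^* g$ and $N_G(f)$ to have no other poles.
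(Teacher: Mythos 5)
Your proof is correct and takes essentially the same route as the paper's: you compute $v_P(N_G f)=|G|\,v_P(f)$ from $\sigma(P)=P$ and then use full ramification, $v_P = e(P|Q)\,v_Q$ with $e(P|Q)=|G|$, to get pole order exactly $n$ at $Q$, while $\pi^*$ multiplies valuations by $|G|$. The only difference is that you spell out the points the paper leaves implicit (that $G$ fixes $P$ so no conjugate $\sigma f$ acquires poles elsewhere, and the exactness of the pole orders), which is a welcome but not a substantive departure.
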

\begin{proof}
	For every element $f \in F(X)$ such that $(f)_\infty=mP$,
	the element $N_G(f)$ is a $G$-invariant element,  so it is in $F(Y)$.
	Moreover, the pole order of $N_G(f)$ seen as a function on $F(X)$ is
	$|G|\cdot m$. But since $P$ is fully ramified the valuation of $N_G(f)$
	expressed in terms of the local uniformizer at $\pi(P)$ is just $-m$.

	On the other hand an element $g\in F(Y)$ seen as an element
	of $F(X)$ by considering the pullback $\pi^*(g)$ has for the same reason
	valuation at $P$ multiplied by the order of $G$.
\end{proof}

\begin{remark}
	The condition of full ramification is necessary in the above lemma.
	Indeed, if a point $Q \in Y$ has more than one elements in $\pi^{-1}(Q)$
	then the pullback of $g$  is supported
	on $\pi^{-1}(Q)$ and gives no information for the Weierstrass semigroup at
	any of the points $P \in \pi^{-1}(Q)$.
\end{remark}
\begin{corollary}\label{g0}
	The order  $|G|\in H(P)$ if and only if $g_{X/G}=0$.
\end{corollary}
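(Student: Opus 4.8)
The plan is to prove the two implications separately, using Lemma \ref{up-down} for the easy direction and a local analysis at the totally ramified point $P$ for the converse. Throughout I write $G=G_1(P)$, $N=|G_1(P)|$, $\pi\colon X\to Y=X/G$ and $Q=\pi(P)$. Since $G$ is the wild inertia at $P$, it fixes $P$ and equals the inertia group of $P$ in the quotient $\pi$, so $P$ is totally ramified, $e(P\mid Q)=N=\deg\pi$, and Lemma \ref{up-down} applies. For the direction $g_{X/G}=0\Rightarrow N\in H(P)$ I would argue directly: if $g_Y=0$ then $Y=\mathbb{P}^1$, so $Q$ carries a function of pole order $1$, i.e. $1\in H(Q)$; feeding $1$ into the map $\pi^*\colon H(Q)\xrightarrow{\times N}H(P)$ of Lemma \ref{up-down} yields $N\cdot 1=N\in H(P)$. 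This half uses nothing beyond total ramification.

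The substance is the converse $N\in H(P)\Rightarrow g_Y=0$, and I would deduce it from the sharper claim that the pole numbers at $P$ divisible by $N$ are exactly $N\cdot H(Q)$; specialising to $N=N\cdot 1$ then forces $1\in H(Q)$, hence $g_Y=0$. To prove the claim I first choose an $F(Y)$-basis $1=\omega_0,\omega_1,\dots,\omega_{N-1}$ of $F(X)$ with $v_P(\omega_i)\equiv i \pmod N$; this is possible because every integer $\geq 2g_X$ lies in $H(P)$, so each residue class mod $N$ contains pole numbers and one may pick each $\omega_i$ to have a single pole at $P$. These $\omega_i$ are $F(Y)$-linearly independent (their valuations are distinct mod $N$), and there are $N=[F(X):F(Y)]$ of them, so they form a basis. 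Now take $f$ with $(f)_\infty=NP$, so $v_P(f)=-N\equiv 0\pmod N$, and write $f=\sum_i a_i\omega_i$ with $a_i\in F(Y)$. Because total ramification gives $v_P(a_i)=N\,v_Q(a_i)\in N\Z$, the quantities $v_P(a_i\omega_i)=N\,v_Q(a_i)+v_P(\omega_i)$ are pairwise incongruent mod $N$, so no cancellation occurs and $v_P(f)=\min_i v_P(a_i\omega_i)$. As $v_P(f)\equiv 0\pmod N$, this minimum is attained only at $i=0$, whence $v_P(f)=N\,v_Q(a_0)$ and $v_Q(a_0)=-1$. Thus $a_0\in F(Y)$ has pole order exactly $1$ at $Q$, which (granting that $Q$ is its only pole) gives $1\in H(Q)$ and $g_Y=0$.

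The main obstacle is exactly the parenthetical step: upgrading the local leading-term computation at $P$ to the global statement that $a_0$ has no poles on $Y$ other than $Q$. Equivalently, one must know that a function with its only pole at $P$ whose order is divisible by $N$ genuinely descends, in leading order, to a function on $Y$ regular away from $Q$. I would settle this by choosing the $\omega_i$ to have their unique pole at $P$ and invoking that $\pi$ is unramified away from $P$ (which holds precisely when $G_1(P)$ fixes only $P$, e.g. for zero $p$-rank curves, via \cite[paragraph 11.13]{Korchm}): then the ring of functions regular off $P$ is the integral closure over the ring of functions regular off $Q$, and the membership $f=\sum_i a_i\omega_i$ forces each $a_i$ to be regular off $Q$. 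Making this wholly precise — that the chosen $\omega_i$ can be taken as an integral basis, or, localising instead, that the connecting homomorphism $k\cong\big(L(NP)/L((N-1)P)\big)^{G}\to H^1(G,L((N-1)P))$ vanishes so that the top pole number is realised by a $G$-invariant function — is the delicate point, and it is exactly where the fully-ramified, ramified-only-at-$P$ hypothesis is essential. With that in hand the claim, and hence the corollary, follow.
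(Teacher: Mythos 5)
Your forward direction is exactly the paper's: Corollary \ref{g0} is stated as an immediate consequence of Lemma \ref{up-down}, and pulling back $1\in H(Q)$ through the map $\pi^*\colon H(Q)\stackrel{\times |G|}{\longrightarrow} H(P)$ is that consequence. All the content is in the converse, and there your argument has a genuine gap --- one you candidly flag yourself, but which is not repaired by the fixes you sketch. The local analysis at $P$ is sound: the $\omega_i$ with pairwise distinct residues of $v_P$ modulo $N$ are $F(Y)$-linearly independent, no cancellation occurs at $P$, and $v_Q(a_0)=-1$ follows. However, concluding $1\in H(Q)$ requires $a_0$ to be regular away from $Q$, and this does \emph{not} follow from the properties you imposed on the $\omega_i$: having a unique pole at $P$ and distinct valuation residues mod $N$ does not make $(\omega_i)$ a local integral basis at the places away from $Q$, even when $\pi$ is unramified there. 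For instance, replacing $\omega_1$ by $\omega_1\cdot\pi^*(h)$, where $h\in F(Y)$ has its only pole at $Q$, preserves both properties but multiplies the relevant determinant by a non-unit, and the coefficients $a_i$ of a decomposition $f=\sum_i a_i\omega_i$ then genuinely acquire poles at the zeros of $h$. So the step ``forces each $a_i$ to be regular off $Q$'' is false for an arbitrary such system; to make it true one must, say, choose each $\omega_i$ of \emph{minimal} pole order $n_i$ in its residue class and then prove $H(P)\cap(n_i+N\mathbb{Z})=n_i+N\,H(Q)$ --- but for $i=0$ this is precisely the assertion being proved. Your alternative reformulation fares no better: when $g_{X/G}>0$ one computes $L(NP)^{G_1(P)}=L_Y(Q)=k$, so the vanishing of the connecting map $k\cong\bigl(L(NP)/L((N-1)P)\bigr)^{G_1(P)}\to H^1\bigl(G_1(P),L((N-1)P)\bigr)$ is \emph{equivalent} to the corollary, not a tool for proving it.

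Two further remarks. First, you quietly import the hypothesis that $\pi$ is ramified only at $P$; this holds in all the paper's applications (zero $p$-rank, HKG covers, via \cite[paragraph 11.13]{Korchm}), but it is not among the hypotheses of the corollary, whose setting assumes only that $P$ is fully ramified --- so even a completed version of your argument would prove a weaker statement. Second, a way to actually close the gap along your lines is to exploit the group: for $G_1(P)$ cyclic of order $p$ the Artin--Schreier basis $1,y,\ldots,y^{p-1}$ \emph{is} integral away from the wild locus (the discriminant of $T^p-T-u$ is a unit), where your descent argument goes through verbatim, and one can then induct along a chief series of $G_1(P)$; alternatively, in the HKG setting the identity $H(P)\cap N\mathbb{Z}=N\,H(Q)$ can be extracted from the semigroup machinery the paper develops later (Lemma \ref{semigroupup}, Proposition \ref{weiersemi}), whose proofs do not rely on Corollary \ref{g0}. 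As it stands, though, the decisive implication $|G_1(P)|\in H(P)\Rightarrow g_{X/G}=0$ is asserted but not proved in your proposal.
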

Another immediate consequence of lemma  \ref{up-down} is the following
\begin{corollary} \label{div-inv}
	If an element $f$ such that $(f)_\infty = a P$ is invariant under the
	action of a subgroup  $H < G$, then $|H|$ divides $a$.
\end{corollary}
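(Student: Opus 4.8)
The plan is to reduce directly to Lemma \ref{up-down}, applied to the intermediate cover defined by $H$. Since $f$ is invariant under $H$, it lies in the fixed field $F(X)^H$, which is the function field $F(Y)$ of the quotient curve $Y := X/H$. Thus $f = \pi^* g$, where $\pi : X \to Y$ is the natural Galois cover with group $H$ and $g \in F(Y)$ is the function corresponding to $f$ under the identification $F(X)^H = F(Y)$.

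First I would verify that $P$ is fully ramified in $\pi$, since this is exactly the hypothesis needed to apply Lemma \ref{up-down}. Because $H < G_1(P)$ fixes $P$, and $P$ is the wildly (hence totally) ramified point of the ambient situation, $P$ is the unique point of $X$ lying over $Q := \pi(P)$; equivalently, the decomposition group of $P$ in $H$ is all of $H$. So $P$ is fully ramified over $Q$ and Lemma \ref{up-down} applies to $\pi$.

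Now I would invoke the pullback statement of Lemma \ref{up-down}: the map $\pi^* : H(Q) \to H(P)$ multiplies pole orders by $|H|$. Writing $(g)_\infty = m Q$ for the pole divisor of $g$ on $Y$, full ramification gives $(f)_\infty = (\pi^* g)_\infty = m |H| P$. Comparing with the hypothesis $(f)_\infty = a P$ yields $a = m |H|$, whence $|H|$ divides $a$, as claimed.

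The only step requiring care—and the one I expect to be the main, if minor, obstacle—is the verification that $P$ remains fully ramified under the intermediate cover $\pi$, since Lemma \ref{up-down} fails without this (as the subsequent remark in the excerpt emphasizes). This is immediate in our setting because $H$ sits inside the stabilizer $G_1(P)$ of the totally ramified point $P$, so no splitting can occur over $Q$; everything else is a direct bookkeeping of pole orders.
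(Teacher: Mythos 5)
Your proof is correct and follows exactly the paper's own route: the paper likewise observes that an $H$-invariant $f$ is the pullback of some $g \in F(X/H)$ and then cites Lemma \ref{up-down}. Your additional verification that $P$ is fully ramified in $X \to X/H$ (immediate since $H$ stabilizes $P$, so the orbit of $P$ is a single point) is left implicit in the paper but is the right point to check.
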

\begin{proof}
	Since $f$ is invariant it is the pullback of a function $g \in F(X/H)$. The result now follows from lemma \ref{up-down}.
\end{proof}

\section{Enumerating jumps}\label{mainsection}
%
%

Recall that an index $i$ is a jump of the representation filtration if and only if $\ker \rho_i\gneq \ker \rho_{i+1}$ and that we have the following sequence for the representation jumps:
\[
	G_1(P)=\ker \rho_0 = \cdots= \ker \rho_{c_1} > \cdots >\ker \rho_{c_{n-1}}>\ker \rho_{c_{n}}=\ker \rho_{r-1}>\{\textrm{id} \}.
\]
\begin{proposition} \label{jumpsrho-gen}
	If $\mathrm{ker}\rho_{c_i} \varsupsetneq \mathrm{ker} \rho_{c_i+1}$, i.e., when $c_i$ is a representation
	jump then $\bar{m}_i=m_{c_i+1}$ is a minimal generator of $H(P)$. 
\end{proposition}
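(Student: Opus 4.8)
The plan is to argue by contradiction. I will assume that $m_{c_i+1}$ is \emph{not} a minimal generator of $H(P)$ and deduce that $\ker\rho_{c_i}=\ker\rho_{c_i+1}$, contradicting the hypothesis that $c_i$ is a representation jump. Recall that an element of a numerical semigroup is a minimal generator precisely when it cannot be written as a sum of two nonzero elements of the semigroup. So if $m_{c_i+1}$ is not a generator, there exist $a,b\in H(P)$ with $a,b>0$ and $a+b=m_{c_i+1}$. Since $0<a,b<m_{c_i+1}\le m_r$, both $a$ and $b$ are pole numbers lying strictly below $m_{c_i+1}$, hence each occurs in the enumeration $m_0<m_1<\cdots<m_r$ at an index at most $c_i$; I write $a=m_j$ and $b=m_k$ with $1\le j,k\le c_i$.

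Next I will exploit the multiplicative structure of the Weierstrass semigroup. Choosing $f_j,f_k\in F$ with $(f_j)_\infty=m_jP$ and $(f_k)_\infty=m_kP$, the product $f_jf_k$ has a single pole at $P$, of order $m_j+m_k=m_{c_i+1}$. Thus $f_jf_k\in L(m_{c_i+1}P)\setminus L(m_{c_i}P)$, and since $m_{c_i+1}$ is a pole number the quotient $L(m_{c_i+1}P)/L(m_{c_i}P)$ is one--dimensional; consequently $\{f_0,\ldots,f_{c_i},f_jf_k\}$ is a basis of $L(m_{c_i+1}P)$, and I may take $f_{c_i+1}=f_jf_k$.

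Finally I bring in the jump hypothesis. Since $c_i$ is a jump, I may pick $\sigma\in\ker\rho_{c_i}\setminus\ker\rho_{c_i+1}$. By definition $\sigma$ acts as the identity on $L(m_{c_i}P)$, and because $m_j,m_k\le m_{c_i}$ both $f_j$ and $f_k$ lie in $L(m_{c_i}P)$; hence $\sigma(f_j)=f_j$ and $\sigma(f_k)=f_k$, so $\sigma(f_jf_k)=\sigma(f_j)\sigma(f_k)=f_jf_k$. But then $\sigma$ fixes the entire basis $\{f_0,\ldots,f_{c_i},f_jf_k\}$ of $L(m_{c_i+1}P)$, i.e. $\rho_{c_i+1}(\sigma)=\mathrm{Id}$ and $\sigma\in\ker\rho_{c_i+1}$, the desired contradiction. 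Therefore $m_{c_i+1}$ is a minimal generator of $H(P)$.

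The argument is essentially the one-line observation that a product of two functions of lower pole order is automatically fixed by everything in $\ker\rho_{c_i}$, so I do not expect a genuine obstacle; the only points needing care are (i) reducing a general decomposition of $m_{c_i+1}$ to a sum of exactly \emph{two} positive pole numbers (if $m_{c_i+1}=\sum_\ell a_\ell m_{j_\ell}$ has more terms, grouping all but one of them yields a two--term splitting inside $H(P)$), and (ii) justifying that an automorphism fixing $P$ preserves each $L(iP)$, so that $\rho_{c_i+1}(\sigma)$ is block--triangular for the flag $L(m_{c_i}P)\subset L(m_{c_i+1}P)$ and fixing a basis indeed forces $\sigma\in\ker\rho_{c_i+1}$. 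I would devote the write-up to these two bookkeeping points.
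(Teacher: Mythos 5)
Your proof is correct and is essentially the paper's own argument: both proceed by contradiction, realizing a non-generator $m_{c_i+1}$ as the pole order of a product of functions with strictly smaller pole numbers, each of which is fixed by every $\sigma\in\ker\rho_{c_i}$, so that $\sigma$ fixes a basis of $L(m_{c_i+1}P)$ and hence $\ker\rho_{c_i}=\ker\rho_{c_i+1}$. Your reduction to a two-term splitting $m_{c_i+1}=m_j+m_k$ with the basis replacement $f_{c_i+1}=f_jf_k$ is merely a tidier packaging of the paper's expression $f_{c_i+1}=\prod_{j}f_j^{\nu_j}+(\text{terms of smaller pole order})$, not a different method.
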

\begin{proof}
Fix elements $f_i \in L(m_rP)$ with pole numbers $m_i$ respectively.  
	Suppose that $\ker \rho_{c_i} \varsupsetneq \ker\rho_{c_i+1}$. 
	By definition the element $f_{c_i+1}$ is not fixed by $\ker \rho_{c_i}$.
    Observe also that every function in $L(mP)$, with $m<m_{c_i+1}$ is by definition fixed by $\ker \rho_{c_i+1}$.

	If $m_{c_i+1}$ is  in the semigroup $\langle m_1,\ldots, m_{c_i}\rangle_{\mathbb Z_+}$
	generated by all $m_1,\ldots,m_{c_i}$ then
	\begin{equation}\label{writepn}
		m_{c_i+1}=\sum_{j\leq c_i} \nu_j m_j, \textrm{ where } \nu_j \in \mathbb{Z}_+,
	\end{equation}
	and there is a constant $C\in k^*$ such that:
	\begin{equation}\label{writeratf}
		f_{c_i+1}=C\cdot\prod_{j\leq c_i} f_j^{\nu_j} + 
\Lambda_{c_i+1},
\end{equation}
where $\Lambda_{c_i+1}$ is a sum of terms such that the degree of their polar part is smaller than $m_{c_i+1}$.
	But this is impossible since every element 
	$\sigma\in \mathrm{ker}{\rho_{c_i}}$ fixes the right hand side of the last equation, therefore 
	 $\ker \rho_{c_{i+1}}=\ker \rho_{c_i}$, a contradiction.
The reader should notice that,  in general, the expression given in eq. (\ref{writepn}) is not unique. This fact does not affect the proof of the  proposition.
\end{proof}

\begin{remark}\label{fields}
	The fields $F, \; F_i$, $i=1,\ldots,n$ given in eq. (\ref{F-seq}) and in definition \ref{repfil}  are generated by the elements
	$\bar{f}_i$ that we introduced in each step, i.e.
	\[
		F_{i+1}=F_i(\bar{f}_i)=F^{G_1(P)}(\bar{f}_1,\ldots,\bar{f}_i).
	\]
	Moreover $F^{G_1(P)}=k(f_{i_0})$ for some index $i_0$,
%
and
	$
	F=k(f_{i_0},\bar{f}_1,\ldots,\bar{f}_{n}=f_r).
	$
	We form the fields   $F_i$ by successive extensions of the rational function field
	$F^{G_1(P)}$. At every jump  $c_i$ of the representation filtration we add an extra element $\bar{f}_i$ to the field
	$F_i$. 
\end{remark}

%
%
%
%
\subsection{Examples}\label{ex}
%
%
The converse of proposition \ref{jumpsrho-gen} is wrong.
We will give examples of curves where $m_{i+1}$, for some index $i\in \mathbb{N}\cup\{0\}$, is a generator of the Weierstrass semigroup
but $i$ is not a representation jump, i.e.: $\ker \rho_{i} =\ker \rho_{i+1}$. In the first example provided below we can take ${i}=0$.
\begin{example}
	Consider the Artin Schreier extension of the rational function field  given by the equation
	\[
		y^p -y=f(x),
	\]
	where $f(x)$ is a polynomial that has a unique pole at $\infty$ and $\deg f(x)=m_r$, $(p,m_r)=1$. Suppose that $m_r>p$. It is well known that the Weierstrass semigroup at {$P$, the point above $\infty$}, is given by $\langle p, m_r \rangle_{\mathbb{Z}_+}$ \cite[p. 618]{StiII}.
	Notice that $|G|=|G_1(P)|=|\ker \rho_0|=p$, with $m_1=p$ a generator of the Weierstrass semigroup \textit{but} $\ker \rho_0 =\ker \rho_1$ since $|\ker \rho_0|$ divides  $m_1$, so  $f_1$ is a $\ker \rho_0$-invariant element
	and $0$ is not a representation jump. Notice that here $m_r=-v_P(y)= -v_\infty(f(x))$ is the unique ramification jump of $G_1(P)$.
\end{example}

Next we will give an example, namely the Giulietti--Korchm\'aros  curve (see \cite{newfamilyofmaximal}), where
$m_{i +1}$ is  a Weierstrass generator at $P$ with $i \neq 0$ such that $\ker \rho_{i} =\ker \rho_{i+1}$.

\begin{example}[The $GK$--curve]\label{GK}
	Let $\xi=p^\alpha$ for a positive integer $\alpha$ and $q=\xi^3$. Let
	\[
		h(X)=\sum_{\kappa=0}^\xi(-1)^{\kappa+1}X^{\kappa(\xi-1)}.
	\]
	In the three dimensional projective space over $\bar{\mathbb{F}}_{q^2}$,
	the curve $X_{GK}$ that results as the complete intersection of the surface with affine equation
	\[
		Z^{\xi^2-\xi+1}=Yh(X)
	\]
	and the Hermitian cone with affine equation
	\[
		X^\xi+X=Y^{\xi+1}
	\]
	is called the GK curve \cite{newfamilyofmaximal}. It has a unique infinite point $P$, and it is maximal over $\bar{\mathbb{F}}_{q^2}$ \cite[Theorem 1]{newfamilyofmaximal}, i.e. the number of its $\mathbb{F}_{q^2}$-rational points attains the Hasse--Weil  upper bound $q^2+1+2g_{X_{\mathrm{GK}}} q$. This example provides us with
	one of the few known families of curves that are maximal. 
	Note that in \cite{GKgen} a generalization of the above curve is given,
	the so-called generalized GK curve.
	The Weierstrass semigroup {at $P$} is generated by $\langle m_1, m_{2}, m_3 \rangle_{\mathbb{Z}_+}$, with
	$m_1=\xi^3-\xi^2+\xi$, $m_{2}=\xi^3$ and  $m_3=\xi^3+1$ \cite[Proposition 1]{newfamilyofmaximal}. Notice that $m_{2}=\xi^3=|G_1(P)|$, see \cite{stefanigiul,newfamilyofmaximal}
	and $F^{G_1(P)}=k(f_2)$.
	We compute the representation filtration and the picture is the following
	\[
		G_1(P)=\ker\rho_0 \supsetneq \ker \rho_1=\ker \rho_{2} \supsetneq \{\textrm{id}\}.
	\]
	That is $m_2$ is a generator \textit{but} $1$ is not a representation jump (notice also that $|\ker \rho_2|=\xi$).
	Here $F^{\ker \rho_2}=k(f_1, f_2)=F^{G_1(P)}(f_1)$, see \cite{stefanigiul}. Moreover there are two ramification jumps
	for this case, \cite[Proposition 4.2]{stefanigiul}: $m_r=-v_P(f_3)$ and $\frac{m_1}{|\ker \rho_2|}$.
\end{example}
%
%
%
%
\subsection{Structure of the Weierstrass semigroups, Galois action \& computation of ramification jumps}

Recall that  $F_{i}=F^{\ker \rho_{c_i}}$, and  $Q_i:=F_i\cap P$ for $1 \leq i \leq n+1$ is  the restriction of the place $P$ to  the intermediate field $F_i$.
Keep in mind that  $r$ counts the number of elements in the Weierstrass semigroup 
up to the first pole number that is not divisible by $p$,
 while $n$ counts the number of representation jumps.
\begin{lemma} \label{semigroupup}
For all  $1\leq i \leq n$,
	the semigroup  $H(Q_{i+1})$ is generated by elements of the semigroup  $H(Q_{i})$ multiplied by
	$p^{h_{i-1}-h_{i}}=[\ker \rho_{c_{i}} : \ker \rho_{c_{i+1}}]$ and an extra {prime to $p$} minimal generator:
	\[
		-v_{Q_{i+1}}(\bar{f}_i)=\frac{m_{c_{i}+1}}{|\ker\rho_{c_{i+1}}|}=\bar{m}_i p^{-h_i}, 
	\]
	where $c_{i}$ 
	is a representation jump and $\bar{m}_i$ the minimal extra generator for $H(Q_{i+1})$ compared to $H(Q_i)$, by proposition \ref{jumpsrho-gen}.
\end{lemma}
\begin{proof}
	From lemma \ref{up-down} in every step of the representation tower we have
	\[\left|\frac{\ker \rho_{c_{i}}}{ \ker \rho_{c_{i+1}}}\right| H(Q_{i})=p^{h_{i-1}-h_{i}} H(Q_{i})
		\subset  H(Q_{i+1})
		.\]
		We will  apply proposition  \ref{jumpsrho-gen} to the extension $F_{i+1}/F^{G_1(P)}$.
		The group $\ker \rho_{c_{i+1}}$ is a normal subgroup of $G_1(P)$ as kernel of a homomorphism.
		Recall that ${F_{i+1}}=F^{\ker \rho_{c_{i+1}}}$ and notice now that the field  extension ${F_{i+1}}/F^{G_1(P)}$ is also HKG
		and their representation filtration is
		obtained from the quotients of the representation filtration of $F/F^{G_1(P)}$ by the group
		$\ker \rho_{{c_{i+1}}}$. Therefore,
		according to proposition \ref{jumpsrho-gen}  and from basic properties arising from the definition,
		see remarks \ref{structureker} and \ref{fields}, $H(Q_{i+1})$ will have an extra generator compared to $H(Q_{i})$, which is coming from the generator of the extension
		$ F_{i+1} / F_{i}$ which is $\bar{f}_i$.
		Using lemma \ref{up-down} we have
		\begin{equation} \label{bas-val1}
		-v_{Q_{i+1}}(\bar{f}_i)=\frac{\bar{m}_i}{|\ker\rho_{c_{i+1}}|}.	
		\end{equation}
		We will now prove that $\bar{f}_i$ has prime to $p$ pole order. We know by proposition
		\ref{fai-rep-lemma} that there is a  prime to $p$ pole number $m$ { minimally chosen} in $H(Q_{i+1})$ together with an element
		$g$ such that $(g)_\infty=m Q_{i+1}$,
		and the action {$\rho_m$} of $\mathrm{Gal}(F_{i+1}/F_i)$ on  $L(mQ_{i+1})$ is faithful. This proves that $g$ generates $F_{i+1}$ over $F_{i}$. Indeed, if this was not the case, then $\{\textrm{id}\}\neq\mathrm{Gal}\left(F_{i+1}/F_i(g)\right)\subseteq\ker\rho_m=\{\textrm{id}\}$.
		It is clear that
		$\frac{\bar{m}_i}{|\ker\rho_{c_{i+1}}|} \leq m$, since  $\frac{\bar{m}_i}{|\ker\rho_{c_{i+1}}|}$ is the smallest element in $H(Q_{i+1})$ not in
		$p^{h_{i-1}-h_{i}}H(Q_i)$, {note also that  if  $\frac{\bar{m}_i}{|\ker\rho_{c_{i+1}}|} > m$ then $g$ would be, by construction, $\ker\rho_{c_i}$-invariant}.

		Every element in the semigroup $H(Q_{i+1})$ should be the pole number of a polynomial in
		$k[\bar{f}_0,\ldots,\bar{f}_{i-1},g]$.
		Thus $\bar{f}_i=P_1(g)$ for an appropriate $P_1\in k[\bar{f}_0,\ldots,\bar{f}_{i-1}]$.
		On the other hand, since $\bar{f}_i$ is by construction another generator of the field extension $F_{i+1}/F_{i}$, we have similarly $g=P_2(\bar{f}_i)$ for an appropriate $P_2\in k[\bar{f}_0,\ldots,\bar{f}_{i-1}]$. Composing $P_1$ and $P_2$ it is easy to see that $P_1\circ P_2=\textrm{id}$.
		But this is possible only if  $P_1$ is  linear on the $g$ variable, i.e
		\begin{equation}\label{primetop}
			\bar{f}_i=\alpha g +\beta, \textrm{ for some } \alpha, \beta \in k[\bar{f}_0,\ldots,\bar{f}_{i-1}].
		\end{equation}
		Recall that all the pole numbers  in $H(Q_{i+1})$ that arise as the polar part of the functions $\bar{f}_0,\ldots,\bar{f}_{i-1}$ are coming from the push forward of the $H(Q_{i})$ multiplied by $p^{h_{i-1}-h_{i}}$ via the map $\pi^*$ of lemma \ref{up-down}.
		Notice  now that there are only two possible cases:
		\begin{enumerate}
			\item If $\alpha \notin k^*$ or $-v_{Q_{i+1}}(\beta) >-v_{Q_{i+1}}(g)=m$ then the two summands on the right hand side of  eq. (\ref{primetop}), must have equal valuations.
			      If not we contradict our hypothesis $\frac{\bar{m}_i}{|\ker\rho_{c_{i+1}}|}\leq m$.
			      With this in mind,  we get that $m$ is a multiple of $p^{h_{i-1}-h_{i}}$, which  again contradicts our hypothesis.
			\item If $\alpha \in k^*$ and $-v_{Q_{i+1}}(\beta) <m$ then $\frac{\bar{m}_i}{|\ker\rho_{c_{i+1}}|} =m$, compare also to remark \ref{nonuniq}.
		\end{enumerate}
		With another simple argument we will now show that $\bar{m}_i$ is the \textit{only} extra generator of $H(Q_{i+1})$ compared to $H(Q_{i})$.
		Suppose not, and let  $h\in k[\bar{f}_0,\ldots,\bar{f}_{i-1}, \bar{f}_{i}]$ be a rational function such that $(h)_\infty=nQ_{i+1}$ with
		$\frac{\bar{m}_i}{|\ker\rho_{c_{i+1}}|}<n$ a minimal generator of $H(Q_{i+1})$. Again we will view $h$ as a polynomial in $\bar{f}_{i}$. Note that the degree of $h$
		with respect to this variable is less than  $p^{h_{i-1}-h_{i}}$, since $\bar{f}_{i}$ generates the extension. Write
		\[
			h=\sum_{\nu=0}^{p^{h_{i-1}-h_{i}}-1}\alpha_\nu \bar{f}_{i}^\nu, \textrm{ with } \alpha_\nu \in k[\bar{f}_0,\ldots,\bar{f}_{i-1}].
		\]
		All the summands have different valuations. Indeed if this was not the case, then there are indices $s \lneq j$ such that $v_{Q_{i+1}}(\alpha_s \bar{f}_{i}^s)=v_{Q_{i+1}}(\alpha_j \bar{f}_{i}^j)$, or
		\[
			p^{h_{i-1}-h_{i}}\cdot \delta = \frac{\bar{m}_i}{|\ker\rho_{c_{i+1}}|} (j-s), \textrm{ for some  positive integer } \delta.
		\]
		This is impossible since $(j-s)<p^{h_{i-1}-h_{i}}$ and $\frac{\bar{m}_i}{|\ker\rho_{c_{i+1}}|}$ is prime to $p$.
		In this way we manage to write  $-v_{Q_{i+1}}(h)$  as an $\mathbb{N}$-linear combination of smaller minimal generators of the Weierstrass semigroup. This  implies
		that $-v_{Q_{i+1}}(h)$ itself cannot be a minimal generator.
		\end{proof}
		According to proposition \ref{jumpsrho-gen}, since  $\{c_1,\ldots,c_{n}\}$ are the jumps of the representation
		filtration, the elements  $\{\bar{m}_1,\ldots,\bar{m}_n=m_r\}$ are generators of the Weierstrass semigroup $H(P)$.
But it is not true that every generator of $H(P)$ occurs this way, as we have already seen in the examples of this section and as the following lemma indicates:
		\begin{lemma}\label{weirgenbutnotrepjump}
			Let  $M$ be  a {minimal} generator of the Weierstrass semigroup  at $P$
			such that $M \neq \bar{m}_{\nu}$ for all    $1 \leq \nu \leq n$.
			Then any function $f_{M} \in F$ with $(f_M)_\infty=M$ is  $G_1(P)$-invariant.
The number of representation jumps is \textit{either} equal to the number of minimal generators of the Weierstrass semigroup \textit{or} it is equal to the number of minimal generators of the Weierstrass semigroup minus one {and $|G_1(P)|=M$}.
		\end{lemma}
		\begin{proof}
			If there is  such a generator $M_i$ of $H(Q_i)$,
			then this generator is a multiple of a generator of  $H(Q_{i-1})$ by lemma \ref{semigroupup}.
			This means that any  function $f_{M_i}\in F_i$ which has pole number $M_i$ at $Q_i$, is an element invariant under the Galois group of the
			extension $F_{i}/F_{i-1}$.
			Using this argument
			inductively we arrive to the conclusion that  the function $f_{M}$ is $G_1(P)$ invariant and thus, by corollary \ref{div-inv},  $|G_1(P)|$ divides  $M$ and thus $M=|G_1(P)|$.
Finally, if such an $f_{M}$ exists it is unique since $F^{G_1(P)}$ is rational
by our hypothesis. This completes the proof.
		\end{proof}
		We sum up all the information concerning the Weierstrass semigroups of the field tower arising from the representation filtration in the next
		\begin{proposition}\label{weiersemi}
			The Weierstrass semigroups of the fields $F_i$ at $Q_i=P \cap F_i$ for every  $1\leq i \leq n$
			and $\ker \rho_{c_1}=G_1(P)$ are given by
			\[
				H(Q_{i+1})= \langle {\bar{m}_j p^{-h_i}}, |G_1(P)| p^{-h_i}\rangle=
				\left\langle
				\frac{m_{c_j+1}}{|\ker \rho_{c_{i+1}}|},\left| \frac{G_1(P)}{\ker \rho_{c_{i+1}}}\right|  \right\rangle_{\mathbb{Z}_+},
			\]
			where $j$ runs through the indices $1\leq j \leq i$.
			For the Weierstrass semigroup at $P$ we get
			\[
				H(P)=\langle \bar{m}_j, |G_1(P)|\rangle_{\mathbb{Z}_+}, \textrm{ where } 1\leq j \leq n, \textrm{ while } H(Q_1)=\mathbb{Z}_+.
			\]
		\end{proposition}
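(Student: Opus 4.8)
The plan is to establish the formula by induction on $i$, climbing the tower of fields \eqref{F-seq} one representation jump at a time and invoking Lemma \ref{semigroupup} at each step. The base of the induction is the bottom field $F^{\ker \rho_{c_1}}=F^{G_1(P)}$: since the cover is HKG, $X/G_1(P)=\mathbb{P}^1$ is rational, so the place $Q_1=P\cap F^{G_1(P)}$ lies on a curve of genus $0$ and therefore $H(Q_1)=\mathbb{Z}_+$. Writing $N_i:=|\ker \rho_{c_i}|$ with $N_1=|G_1(P)|$, I record $H(Q_1)=\langle 1\rangle=\langle N_1/N_1\rangle$, so that the single generator at the base is the ``rational'' generator $|G_1(P)/\ker \rho_{c_1}|=1$, which I will track upward.

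For the inductive step I assume that $H(Q_i)$ is generated by $\frac{m_{c_j+1}}{N_i}$ for $1\leq j\leq i-1$ together with $\frac{N_1}{N_i}=|G_1(P)/\ker \rho_{c_i}|$. Lemma \ref{semigroupup} then describes $H(Q_{i+1})$ as the semigroup generated by $\frac{N_i}{N_{i+1}}H(Q_i)$ and the single new prime to $p$ generator $-v_{Q_{i+1}}(f_{c_i+1})=\frac{m_{c_i+1}}{N_{i+1}}$ coming from the degree $\frac{N_i}{N_{i+1}}$ extension $F^{\ker \rho_{c_{i+1}}}/F^{\ker \rho_{c_i}}$. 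Scaling each generator of $H(Q_i)$ by $\frac{N_i}{N_{i+1}}$ turns $\frac{m_{c_j+1}}{N_i}$ into $\frac{m_{c_j+1}}{N_{i+1}}$ and turns $\frac{N_1}{N_i}$ into $\frac{N_1}{N_{i+1}}$; adjoining the new generator $\frac{m_{c_i+1}}{N_{i+1}}$ then yields precisely the claimed generating set for $H(Q_{i+1})$. Taking $i=n$ and noting that the top of the tower is $F^{\ker \rho_{c_{n+1}}}=F^{\{1\}}=F$ (the convention $c_{n+1}=r$), with $Q_{n+1}=P$ and $N_{n+1}=1$, recovers $H(P)=\langle m_{c_j+1},|G_1(P)|\rangle$.

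The content of the argument is thus almost entirely bookkeeping: the essential analytic input---that each step of the tower is totally ramified, that the norm and pullback maps of Lemma \ref{up-down} multiply pole orders by the degree, and that only one new prime to $p$ generator appears per representation jump---is already packaged in Lemma \ref{semigroupup}. The one point that deserves care, and which I expect to be the main obstacle, is the status of the generator $|G_1(P)/\ker \rho_{c_{i+1}}|$: it is asserted to lie in the generating set at every level even though it need not be a \emph{minimal} generator. Its legitimacy as an element of $H(Q_{i+1})$ is guaranteed because $F^{\ker \rho_{c_{i+1}}}/F^{G_1(P)}$ is Galois with $X/G_1(P)$ rational, so $|G_1(P)/\ker \rho_{c_{i+1}}|\in H(Q_{i+1})$ by Corollary \ref{g0}; and its persistence through the induction is exactly the image of the base generator $1\in H(Q_1)$ under the successive scalings. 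Whether it is redundant or genuinely needed is precisely the dichotomy isolated in Lemma \ref{weirgenbutnotrepjump} (and reflected in the two cases $G_1(P)>G_2(P)$ versus $G_1(P)=G_2(P)$), but this dichotomy does not affect the correctness of the formula as a generating set; one simply keeps $|G_1(P)/\ker \rho_{c_{i+1}}|$ in the list throughout.
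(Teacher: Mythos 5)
Your proposal is correct and takes essentially the paper's own route: the paper offers no separate proof, presenting Proposition \ref{weiersemi} as the summary obtained by iterating Lemma \ref{semigroupup} up the tower from the rational base $H(Q_1)=\mathbb{Z}_+$, which is precisely your induction, including tracking $\left|G_1(P)/\ker\rho_{c_{i+1}}\right|$ as the successively scaled image of $1\in H(Q_1)$ (equivalently, via Lemma \ref{up-down} and Corollary \ref{g0} applied to the Galois subextension $F^{\ker\rho_{c_{i+1}}}/F^{G_1(P)}$). Your closing remark that this generator may be redundant without affecting the formula as a generating set is also exactly the paper's dichotomy (Proposition \ref{gen-chara} and Lemma \ref{weirgenbutnotrepjump}).
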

		\begin{proposition} \label{action}
			Assume that $\sigma \in  \mathrm{ker}\rho_{c_{i}} -  \mathrm{ker}\rho_{c_{i+1}}$.
			Then
			\begin{eqnarray}
				\sigma(f_\nu)& =& f_\nu \mbox{ for all } \nu \leq c_i  \nonumber\\
				\sigma(f_{c_i+1})=\sigma(\bar{f}_{i})& =& \bar{f}_{i}+c(\sigma), \mbox{ where } c(\sigma) \in k ^*.  \label{f-el-ab}
			\end{eqnarray}
		\end{proposition}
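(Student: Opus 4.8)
The first assertion is immediate and I would dispose of it first: since $\sigma\in\ker\rho_{c_i}$, the operator $\rho_{c_i}(\sigma)$ is the identity on $L(m_{c_i}P)$, a space that contains $f_0,\dots,f_{c_i}$, so $\sigma(f_\nu)=f_\nu$ for all $\nu\le c_i$. For the second assertion, $\sigma$ preserves the pole filtration, so $\sigma(f_{c_i+1})\in L(m_{c_i+1}P)$ and acts on the one–dimensional quotient $L(m_{c_i+1}P)/L(m_{c_i}P)$ by a scalar; as $\sigma$ is a $p$–element over a field of characteristic $p$, this scalar is $1$. Hence $\sigma(f_{c_i+1})=f_{c_i+1}+d(\sigma)$ with $d(\sigma)\in L(m_{c_i}P)$, and $d(\sigma)\neq 0$ because $\sigma\notin\ker\rho_{c_i+1}=\ker\rho_{c_{i+1}}$. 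The whole statement therefore reduces to proving that $d(\sigma)$ is a \emph{constant}.

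The plan is to read this off from the elementary abelian extension $E'/E$, where $E=F^{\ker\rho_{c_i}}$ and $E'=F^{\ker\rho_{c_{i+1}}}$, with Galois group $\Gamma=\ker\rho_{c_i}/\ker\rho_{c_{i+1}}$ of order $p^k$. By Lemma \ref{semigroupup} this extension is HKG: totally ramified at $Q_i=P\cap E$ and unramified elsewhere, generated by $f:=f_{c_i+1}$, with $-v_{Q_{i+1}}(f)=\lambda_i=m_{c_i+1}/|\ker\rho_{c_{i+1}}|$ prime to $p$; moreover, by Lemma \ref{semigroupup} and Proposition \ref{weiersemi}, $\lambda_i$ is the \emph{smallest} prime–to–$p$ pole number in $H(Q_{i+1})$. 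Since $d(\sigma)\in L(m_{c_i}P)$ is fixed pointwise by $\ker\rho_{c_i}$, it lies in $E$; I write $s_\sigma:=-v_{Q_{i+1}}(d(\sigma))\ge 0$ for its pole order, so that $d(\sigma)$ is constant iff $s_\sigma=0$.

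The heart of the argument is a double computation of the different $\mathfrak d=\mathfrak d_{E'/E}$, which is supported only at $Q_{i+1}$. Because $p\nmid\lambda_i=-v_{Q_{i+1}}(f)$, expanding the generator in a uniformizer $u$ at $Q_{i+1}$ gives, for each $\sigma\neq 1$,
\[
 v_{Q_{i+1}}\!\big(\sigma(f)-f\big)=v_{Q_{i+1}}(f)+i(\sigma)-1,
\]
where $i(\sigma)=v_{Q_{i+1}}(\sigma(u)-u)$ is the ramification number. As $\sigma(f)-f=d(\sigma)$, this yields $i(\sigma)=\lambda_i+1-s_\sigma$, and hence
\[
 v_{Q_{i+1}}(\mathfrak d)=\sum_{\sigma\neq 1}i(\sigma)=(p^k-1)(\lambda_i+1)-\sum_{\sigma\neq 1}s_\sigma .
\]
On the other hand I would pin down $v_{Q_{i+1}}(\mathfrak d)$ independently by Riemann--Hurwitz: the genera of $E'$ and $E$ are determined by the semigroups $H(Q_{i+1})=\langle p^kH(Q_i),\lambda_i\rangle$ and $H(Q_i)$ (Lemma \ref{semigroupup}), and since $\gcd(\lambda_i,p)=1$ the gluing/telescopic genus formula (cf. Remark \ref{telescopic}) gives $g_{E'}=p^kg_E+(p^k-1)(\lambda_i-1)/2$, whence Riemann--Hurwitz returns exactly $v_{Q_{i+1}}(\mathfrak d)=(p^k-1)(\lambda_i+1)$. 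Comparing the two expressions forces $\sum_{\sigma\neq1}s_\sigma=0$, so every $s_\sigma=0$; that is, each $d(\sigma)$ has no pole and is a constant, and by the first paragraph $d(\sigma)=c(\sigma)\in k^{*}$.

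I expect the main obstacle to be exactly the constancy of $d(\sigma)$. The naive divisibility information from Corollary \ref{div-inv} (namely $|\ker\rho_{c_i}|\mid -v_P(d(\sigma))$) does \emph{not} by itself exclude a nonconstant correction — a single Artin--Schreier extension with the non-minimal generator $f=xy$ produces $\sigma(f)-f=x$, a nonconstant element — and what rules this out is the \emph{minimality} of the pole order $\lambda_i$, which is what makes $\lambda_i$ the prime-to-$p$ generator of $H(Q_{i+1})$ and thereby fixes the genus of $E'$ and pushes the different to its maximal value. The two points that will require care are the validity of the generator–expansion formula for $i(\sigma)$, which genuinely uses $p\nmid\lambda_i$, and the independent genus computation for the glued numerical semigroup $\langle p^kH(Q_i),\lambda_i\rangle$.
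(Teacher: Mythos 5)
Your proof is correct, but it follows a genuinely different route from the paper's. After the common normalization step (the scalar on $L(m_{c_i+1}P)/L(m_{c_i}P)$ is $1$ since $\sigma$ is a $p$-element, so $\sigma(f_{c_i+1})-f_{c_i+1}=d(\sigma)\in L(m_{c_i}P)\cap F^{\ker\rho_{c_i}}$), the paper argues geometrically: if $c(\sigma)$ were nonconstant it would have a zero $Q\neq Q_i$ on the affine model $\mathrm{Spec}\,k[f_1,\dots,f_{c_i}]$, and a binomial-expansion computation shows the corresponding maximal ideal is $\sigma$-stable, so $\sigma$ fixes a second point --- contradicting the defining property of a HKG cover that only one place ramifies. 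You instead compute the different of $E'/E$ at $Q_{i+1}$ twice --- once via Hilbert's formula combined with Lemma \ref{HKT} (giving $(p^k-1)(\lambda_i+1)-\sum_{\sigma\neq 1}s_\sigma$), once via Riemann--Hurwitz with the genera read off from the semigroup structure of Lemma \ref{semigroupup} through the gluing genus formula --- and the comparison forces every $s_\sigma=0$. Both arguments ultimately rest on the HKG hypothesis, but the paper uses it directly and softly (uniqueness of the ramified point, no genus computation needed), whereas your route requires the full strength of Lemma \ref{semigroupup} plus standard numerical-semigroup theory; in exchange it delivers as a by-product the exact unique lower jump $\lambda_i$ of $\ker\rho_{c_i}/\ker\rho_{c_{i+1}}$, i.e.\ essentially part (1) of Theorem \ref{jumps}, which the paper only derives afterwards from Proposition \ref{action} together with Lemma \ref{HKT}. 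Two bookkeeping points deserve attention in your write-up: first, the gluing formula $g_{E'}=p^kg_E+(p^k-1)(\lambda_i-1)/2$ requires $\lambda_i\in H(Q_i)$, which is true (it follows from the norm map of Lemma \ref{up-down}, as the paper notes in Section 4 when verifying the telescopic property) but should be checked explicitly rather than inferred from Remark \ref{telescopic}, which appears later in the paper --- the underlying semigroup facts are external, so citing \cite{Codes} or \cite{fp} directly avoids any appearance of circularity; second, since Lemma \ref{semigroupup} and Lemma \ref{HKT} are established independently of Proposition \ref{action}, your argument is legitimately non-circular, and you correctly avoid invoking Theorem \ref{jumps} or Corollary \ref{sameorder}.
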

		\begin{proof}
			In general $\sigma(\bar{f}_i) = \alpha \cdot \bar{f}_{i}+c(\sigma)$, where $c(\sigma)\in k[f_1,\ldots f_{c_i}],$
			and $\alpha \in k^*$. Since $\sigma$ has order   a power of $p$ we see that  $\alpha=1$.
			But if $c(\sigma)$ is not constant then it has a root $Q\neq Q_i$, 
since the field $k$ is assumed to be algebraically closed.			We will prove that $Q$ is then a ramified point and this will  lead to a contradiction
			since only one place can ramify, and this is $Q_i$.

			Consider the ring $A:=\mathcal{O}(X-Q_i)$, where $\mathcal{O}$ denotes the structure
			sheaf of a nonsingular projective model of our curve $X$ that corresponds to the function field ${F_i}$.  
			The ring $A$ is
			by definition
			\begin{equation*} \label{algebra}
				A= \bigcup_{\nu=0}^\infty L(\nu Q_i)=k[f_1,\ldots,f_{c_i}],
			\end{equation*}
			where the elements $f_1,\ldots,f_{c_i}$ are subject to several relations coming from
			the function field of the curve.
			Observe that when $\nu$ becomes greater than or equal to
			$\bar{m}_{i-1}p^{-h_{i-1}}$
			(i.e. is  greater than all the generators of the Weierstrass semigroup at $Q_i$)
			the algebra generated by $f_1,\ldots,f_{c_i}$ as elements of the vector space $L(\nu Q_i)$ is the ring $A$.
			Keep in mind that the vector space $L(\nu Q_i)$ is inside the function field of the curve
			so there is a well defined notion of multiplication on elements of $L(\nu Q_i)$.
			Every place $Q\neq  Q_i$ of the function field
			$F_i$
			corresponds to
			a unique maximal ideal of the ring  $A$.

			Notice also that the automorphism group acts on $A$. We will prove that the ideal
			$Q$ is left invariant under the action of $\sigma$.
			Let $Q$ be a root of $c(\sigma)$
			and denote by $Q$ the corresponding ideal of $A$. It is finitely generated,
			so $Q=\langle g_j \rangle$
			where $g_j$ are polynomial expressions in $f_i$, where $1\leq i \leq c_i$. We will prove that
			\[
				\sigma(g_j) \in Q \mbox{ for all } j.
			\]
			Indeed, write
			\[
				g_j = \sum_{\nu_1,\ldots,\nu_{c_i}} a_{\nu_1,\ldots,\nu_{c_i}} f_1^{\nu_1}\cdots f_{c_i}^{\nu_{c_i}}.
			\]
			Then
			\[
				\sigma(g_j)=\sum_{\nu_1,\ldots,\nu_{c_i}} a_{\nu_1,\ldots,\nu_{c_i}} f_1^{\nu_1}\cdots \left (f_{c_i}+c(\sigma ) \right )^{\nu_i}=
			\]
			\[
				\sum_{\nu_1,\ldots,\nu_{c_i}} a_{\nu_1,\ldots,\nu_{c_i}} f_1^{\nu_1}\cdots f_{c_i}^{\nu_{c_i}} + 
				\sum_{\nu_1,\ldots,\nu_{c_i}} a_{\nu_1,\ldots,\nu_{c_i}}
				 \sum_{\mu=1}^{\nu_i} f_1^{\nu_1}\cdots f_{c_{i}-1}^{\nu_{c_{i}-1}}
				\binom{\nu_{c_i}}{\mu}  c(\sigma)^{\mu} f_{c_i}^{\nu_{c_i}-\mu}.
			\]
			But $Q$ is a root of $c(\sigma)$ and this is equivalent to $c(\sigma) \in Q$ so the second summand  of the last equation is an element in $Q$.

			We would like also to point out how  we can construct  the curve $X-Q_i$.
			If $\nu$ is big enough then
			the projective map $\Phi$ corresponding to the linear series
			$|\nu Q_i|$ is an
			embedding, see for example \cite[Theorem 4.3.15]{Goldschmidt}. The image $\Phi(X)$ is then a nonsingular curve; removing the point
			$\Phi(Q_i)$ we obtain the affine non-singular curve with coordinate ring $A$. Notice that, by construction, $X$ is the projective
			closure of that curve with $Q_i$ being the point at infinity, while the function fields for both curves are just
			${F_i}$.
		\end{proof}
		In what follows we will use the following
		\begin{lemma}\label{HKT}
			Let   $f\in F$ such that $p\nmid v_P(f)$. If $\sigma \in G_i \setminus G_{i+1}$, then
			$\sigma(f)=f+f'$ with $f'\neq 0$ and $i=-v_P(f)+v_P(f')$.
		\end{lemma}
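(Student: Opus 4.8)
The plan is to reduce the statement to a leading-term computation in the completed local ring at $P$, which is isomorphic to $k[[t]]$ for a local uniformizer $t$. First I would unwind the definition of the ramification filtration given in the introduction: since $G_i(P)=\{\sigma: v_P(\sigma(t)-t)\geq i+1\}$, the hypothesis $\sigma\in G_i\setminus G_{i+1}$ is equivalent to $v_P(\sigma(t)-t)=i+1$. Setting $u:=\sigma(t)/t-1$ and using $v_P(t)=1$, this says exactly that $v_P(u)=i$; so I may write $u=b t^i+(\text{higher order})$ with $b\in k^*$.

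Next I would expand $f$ as a Laurent series in $t$. Because $\sigma$ is a $k$-automorphism it fixes all coefficients and acts continuously on the completion, so writing $f=\sum_{j\geq n}a_j t^j$ with $n:=v_P(f)$ and $a_n\neq 0$, one gets $\sigma(f)=\sum_j a_j\,\sigma(t)^j=\sum_j a_j\, t^j(1+u)^j$. Subtracting, $f':=\sigma(f)-f=\sum_j a_j\, t^j\big((1+u)^j-1\big)$, and the whole problem becomes one of locating the term of smallest valuation on the right-hand side.

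The key step is the binomial analysis of $(1+u)^j-1$ in characteristic $p$. Since $v_P(u)=i>0$, for $p\nmid j$ the expansion $(1+u)^j-1=ju+\binom{j}{2}u^2+\cdots$ has leading term $ju$, hence valuation exactly $i$ (the coefficient $j$ is nonzero in $k$); whereas for $p\mid j$, writing $p^a\|j$ and using $(1+u)^{p^a}=1+u^{p^a}$, the leading term becomes $(j/p^a)\,u^{p^a}$ of valuation $p^a i>i$. Thus the summand indexed by $j$ has valuation $\geq j+i$, with equality \emph{precisely} when $p\nmid j$. The smallest index is $j=n$, and here the hypothesis $p\nmid v_P(f)$ enters decisively: since $p\nmid n$, this summand equals $a_n n b\, t^{n+i}+\cdots$ with nonzero leading coefficient $a_n n b$, so it has valuation exactly $n+i$. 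Every other summand has index $j\geq n+1$ and therefore valuation $\geq j+i>n+i$, so no cancellation can reach order $n+i$.

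Consequently $f'\neq 0$ and $v_P(f')=n+i=v_P(f)+i$, which rearranges to $i=-v_P(f)+v_P(f')$, as claimed. The only point requiring genuine care — and the main (if mild) obstacle — is confirming that the leading term of $f'$ is not annihilated: this is exactly where $p\nmid v_P(f)$ is used to guarantee $n\not\equiv 0\pmod p$, and one must simultaneously verify that the indices $j$ with $p\mid j$, whose $u$-expansions start at higher powers of $u$, cannot contribute at valuation $n+i$. Both checks are settled by the valuation bookkeeping above.
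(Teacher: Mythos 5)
Your proof is correct, and it differs from the paper in an essential way: the paper offers no argument at all, disposing of the lemma in a single line by citing \cite[Lemma 11.83]{Korchm}. What you have written is a self-contained proof of the cited fact, and it is the standard one: pass to the completion $\hat{\mathcal{O}}_{X,P}\cong k[[t]]$, write $\sigma(t)=t(1+u)$ with $v_P(u)=i$, expand $f=\sum_{j\ge n}a_jt^j$, and locate the term of least valuation in $\sigma(f)-f=\sum_j a_jt^j\bigl((1+u)^j-1\bigr)$. Your valuation bookkeeping is right: for $p\nmid j$ the summand has valuation exactly $j+i$ (leading coefficient $ja_jb\ne 0$ in $k$), while for $j=p^aj'$ with $a\ge 1$, $p\nmid j'$, the identity $(1+u)^{p^a}=1+u^{p^a}$ pushes the valuation up to $j+p^ai>j+i$; hence the minimum $n+i$ is attained only at $j=n$, where $p\nmid n$ is precisely what keeps the coefficient $na_nb$ alive, and no cancellation can occur. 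Two small points are worth making explicit. First, your computation silently assumes $i\ge 1$ (you invoke $v_P(u)=i>0$), and this is genuinely needed: for $\sigma\in G_0\setminus G_1$ one has $\sigma(t)=at+\cdots$ with $a\ne 1$ a root of unity, and the would-be leading term $a_n(a^n-1)t^n$ can vanish when $a^n=1$ even though $p\nmid n$, so the conclusion can fail at $i=0$; the restriction is harmless here because the paper applies the lemma only to elements of the $p$-group $G_1(P)$, for which $i\ge 1$ automatically. Second, for negative exponents $j$ (the relevant case, since $f$ has a pole at $P$) you are implicitly using the binomial series $(1+u)^j=\sum_{m\ge 0}\binom{j}{m}u^m$, which converges $t$-adically because $v_P(u)>0$ and whose coefficients reduce modulo $p$ in the usual way; one sentence saying so would close the last formal gap. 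Modulo these remarks, your argument gives the reader a complete proof where the paper gives only a pointer into the literature.
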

		\begin{proof}
			This is  \cite[Lemma 11.83]{hirschfeld2008algebraic}.
		\end{proof}
		\begin{theorem}\label{jumps}
			Let $P$ {be} the totally ramified place of the HKG-cover. Recall that  $Q_i=P\cap F_i$, with  $1\leq i \leq n+1$. Let $\mu$ be  the number of ramification jumps of eq. (\ref{ramfiltration}) and $n$ the number of representation jumps, see eq. (\ref{rep-jumps}).
			\begin{itemize}
				\item[(i)]\label{A} The groups  $\ker \rho_{c_{i}}/\ker \rho_{c_{i+1}}$, for each $1\leq i \leq n$,  have exactly one  lower ramification jump
				which  is equal to $-v_{Q_{i+1}}(\bar{f}_i)$.
				\item[(ii)] The jumps {mentioned in (i)} are equal to the ramification jumps of the groups $G_{b_i}/G_{b_{i+1}}$, for $1\leq i \leq \mu$,  thus  $\mu=n$ and they exhaust  all the ramification jumps of $G_1(P)$. 

			\end{itemize}
		\end{theorem}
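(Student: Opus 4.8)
The plan is to treat the two assertions in turn: part~(\ref{A}) by a local computation at the intermediate place $Q_{i+1}$, and the second assertion by a recursion down the tower $G_1(P)=\ker\rho_{c_1}\supsetneq\cdots\supsetneq\ker\rho_{c_n}\supsetneq\{1\}$ that promotes the jumps found at the places $Q_{i+1}$ to the ramification filtration of $G_1(P)$ at $P$.

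For part~(\ref{A}) I would argue inside $F^{\ker\rho_{c_{i+1}}}$ at $Q_{i+1}$. By Proposition~\ref{action} every $\sigma\in\ker\rho_{c_i}\setminus\ker\rho_{c_{i+1}}$ induces on $\ker\rho_{c_i}/\ker\rho_{c_{i+1}}=\mathrm{Gal}(F^{\ker\rho_{c_{i+1}}}/F^{\ker\rho_{c_i}})$ the translation $f_{c_i+1}\mapsto f_{c_i+1}+c(\sigma)$ with $c(\sigma)\in k^*$. As $v_{Q_{i+1}}(f_{c_i+1})=-\lambda_i$ with $(\lambda_i,p)=1$ by Lemma~\ref{semigroupup}, Lemma~\ref{HKT} applied to $f_{c_i+1}$ at $Q_{i+1}$ yields ramification jump $-v_{Q_{i+1}}(f_{c_i+1})+v_{Q_{i+1}}(c(\sigma))=\lambda_i$, the added term vanishing because $c(\sigma)$ is a nonzero constant. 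Since this value is independent of the nontrivial $\sigma$ chosen, $\ker\rho_{c_i}/\ker\rho_{c_{i+1}}$ has the single lower jump $\lambda_i=-v_{Q_{i+1}}(f_{c_i+1})$.

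The difficulty in the second assertion is that $f_{c_i+1}$ has pole order $m_{c_i+1}=p^{h_i}\lambda_i$ at $P$, which is divisible by $p$ for $i<n$, so Lemma~\ref{HKT} is unavailable at $P$ and the jump $\lambda_i$ only lives at $Q_{i+1}$. To transfer it I would use Serre's comparison of ramification numbers under a quotient \cite[Ch.~IV, Prop.~3]{SeL}: for $N\trianglelefteq H$ fixing the totally ramified $P$ and $\bar\sigma\neq1$ in $H/N$, $\sum_{s\in\sigma N} i_H(s)=|N|\,i_{H/N}(\bar\sigma)$, where $i_H(s)=v_P(s(t)-t)$ equals one plus the jump of $s$. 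I would first record the ordering $\lambda_1<\cdots<\lambda_n$, which follows from $m_{c_1+1}<\cdots<m_{c_n+1}$ (the $c_i$ increase) together with $m_{c_i+1}=p^{h_i}\lambda_i$ and the strict decrease of the $h_i$.

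The recursion proves that $\ker\rho_{c_i}=\mathrm{Gal}(F/F^{\ker\rho_{c_i}})$ has, at $P$, lower jumps exactly $\lambda_i<\cdots<\lambda_n$ with $(\ker\rho_{c_i})_{\lambda_{i'}}=\ker\rho_{c_{i'}}$. The base case $i=n$ is part~(\ref{A}) with $Q_{n+1}=P$, giving the single jump $\lambda_n=m_r$. For the step set $H=\ker\rho_{c_i}$, $N=\ker\rho_{c_{i+1}}$; by the recursion each $\tau\in N\setminus\{1\}$ has $i_H(\tau)=\lambda_{i'}+1>\lambda_i+1$ for some $i'\ge i+1$, and part~(\ref{A}) gives $i_{H/N}(\bar\sigma)=\lambda_i+1$. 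In a nontrivial coset $\sigma N$, choosing $s_*$ with minimal $i_H$ and using $i_H(s_*\tau)=\min\{i_H(s_*),i_H(\tau)\}$ when these differ (from $s_*\tau(t)-t=(s_*(t)-t)+s_*(\tau(t)-t)$ and the $v_P$-invariance under $s_*$), one sees all members share one value; Serre's identity forces it to be $\lambda_i+1$. Hence every $s\in H\setminus N$ has jump $\lambda_i$, so $H_{\lambda_i}=H$ and $H_{\lambda_i+1}=N$; combined with the intrinsic equality $H_j\cap N=N_j$ and the recursion for $N$, the jumps of $H$ are $\lambda_i<\cdots<\lambda_n$ as claimed. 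Taking $i=1$ gives $G_{\lambda_i}(P)=\ker\rho_{c_i}$, hence $\mu=n$, $b_i=\lambda_i$, and these exhaust the jumps of $G_1(P)$, matching the quotients $G_{b_i}/G_{b_{i+1}}$ of part~(\ref{A}). The main obstacle is exactly this coset-averaging step, where the jump computed at $Q_{i+1}$ is promoted to $P$; it works only because $\lambda_i<\lambda_{i+1}$ keeps the deeper ramification of $N$ from contaminating the top layer.
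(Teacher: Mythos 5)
Your part~(\ref{A}) is essentially the paper's own argument: both apply Proposition~\ref{action} together with Lemma~\ref{HKT} at $Q_{i+1}$, using the coprimality $(\lambda_i,p)=1$ supplied by Lemma~\ref{semigroupup}, and uniqueness of the jump comes from every nontrivial class acting by translation by a nonzero constant. For the second assertion, however, you take a genuinely different route. The paper argues top-down: it first pins the last jump by observing that $\ker\rho_{c_n}$ is elementary abelian with jump $m_r$, which by Proposition~\ref{gap-comp} is the largest admissible jump, so $b_\mu=m_r$; it then promotes the layer-by-layer jumps to $G_1(P)$ via the compatibility of lower numbering with subgroups \cite[Prop.~IV.2]{SeL}, and excludes extra jumps by a counting argument (more than $n$ jumps would force some quotient $\ker\rho_{c_i}/\ker\rho_{c_{i+1}}$ to have two jumps); the exact identification $G_{b_i}=\ker\rho_{c_i}$ is only completed afterwards, in Corollary~\ref{sameorder}, by a separate induction using quotients by ramification subgroups. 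You instead run a bottom-up induction on the tower using Serre's averaging formula $\sum_{s\in\sigma N} i_H(s)=|N|\,i_{H/N}(\bar\sigma)$, which computes the entire lower filtration of each $\ker\rho_{c_i}$ at $P$ directly and delivers $G_{b_i}(P)=\ker\rho_{c_i}$ --- i.e.\ Corollary~\ref{sameorder} --- as a byproduct, without invoking Proposition~\ref{gap-comp} or the counting step; this is more self-contained and in fact proves a sharper statement in one pass. One step you should make explicit: the ultrametric equality $i_H(s_*\tau)=\min\{i_H(s_*),i_H(\tau)\}$ is only usable once you know $i_H(s_*)<i_H(\tau)$ for all nontrivial $\tau\in N$, so first rule out $i_H(s_*)\geq\lambda_{i+1}+1$ by noting that minimality of $s_*$ would then force every member of the coset to have $i_H\geq\lambda_{i+1}+1$, making the Serre average exceed $i_{H/N}(\bar\sigma)=\lambda_i+1$, a contradiction; after that the two branches of your argument close correctly, and you rightly isolate $\lambda_i<\lambda_{i+1}$ (which you derive from $m_{c_i+1}=p^{h_i}\lambda_i$ with $h_i$ strictly decreasing) as the inequality that keeps the deep ramification of $N$ from contaminating the top layer.
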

		\begin{proof}
			We first prove (i).
			Lemma \ref{semigroupup} implies
			$\textrm{gcd}(v_{Q_{i+1}}(\bar{f}_i),p)=1$.
			Using proposition \ref{action} and lemma  \ref{HKT} we obtain that  the jump for
			$\ker \rho_{c_{i}}/\ker \rho_{c_{i+1}}=\mathrm{Gal}(F_{i+1}/F_i)$
			is indeed $-v_{Q_{i+1}}(\bar{f}_i)$ since $\sigma(\bar{f}_i)=\bar{f}_i+c(\sigma)$, where $c(\sigma)$ is constant
			and has valuation $0$.
			This jump is also unique by lemma \ref{HKT}.
Moreover the extension $F_{i+1}/F_i$ is elementary abelian since
 $c:G_1(P)\rightarrow k$ gives rise to  an isomorphism from $\mathrm{Gal}(F_{i+1}/F_i)$ to a $p$-subgroup of the additive group of $k$. 
Compare also to \cite[Prop III.7.10]{StiBo}.


			In order to prove (ii) we are going to apply (i) step by step. In the first step we consider the
			group $\ker \rho_{c_{n}}$ which is elementary abelian
			with a unique jump at $m_r$. Since this group is a subgroup of $G_1(P)$ and this is the maximum jump we can have  (see  proposition \ref{gap-comp}), we  obtain $m_r=b_\mu$.

			For the next step we consider the lower  ramification jumps of the filtration of the group $\ker \rho_{c_{n-1}}$.
			From the previous step we see that the quotient
			$\ker \rho_{c_{n-1}}/\ker \rho_{c_{n}}$ has a unique lower jump at $-v_{Q_{n}}(\bar{f}_{n-1})$.  It is well known that the first
			jumps in the lower and upper numbering coincide, since the Herbrand function $\phi$, as it is defined in \cite[IV.3, p.73]{SeL}, is the identity for values smaller than the first lower jump. Thus $-v_{Q_{n}}(\bar{f}_{n-1})$ is also the first jump in the upper numbering for $\ker \rho_{c_{n-1}}/\ker \rho_{c_{n}}$. Using the well known property of the upper ramification filtration: that for all normal subgroups $H$ of $G$ and $u$ an upper jump we have $(G/H)^u=G^u H/H$  \cite[IV. prop. 14]{SeL}, we  derive that $-v_{Q_{n}}(\bar{f}_{n-1})$ equals also the first upper, thus the first lower jump of $\ker \rho_{c_{n-1}}$. Note that
			since $\ker \rho_{c_{n}}$ is a normal subgroup of $\ker \rho_{c_{n-1}}$, the latter group inherits the lower ramification jump of the
			first step. That is, $m_r$ is also a lower jump for $\ker \rho_{c_{n-1}}$, the greatest one, since by eq.  (\ref{bas-val1}) we have
\[
-v_{Q_{n}}(\bar{f}_{n-1})=
\frac{m_{c_{n-1}+1}}{|\ker \rho_{ c_{n}}|}=\frac{\bar{m}_{n-1}}{|\ker \rho_{ c_{n}}|} < 
\frac{\bar{m}_n}{|\ker{\rho_{c_{n+1}}}|}=-v_{Q_{n+1}}(\bar{f}_{n}).
\]
Notice that  $\bar{m}_n= m_{c_n +1}=m_{r}$ and $|\ker{\rho_{c_{n+1}}}|=1$.

			We continue like this,
			using the fact that every ramification jump of a subgroup of $G_1(P)$ is a
			ramification jump of $G_1(P)$ as well \cite[Proposition IV.2 p. 62]{SeL} and  get that all the
			positive integers $-v_{Q_{i+1}}(\bar{f}_i)$  are indeed jumps of $G_1(P)$.

			Are there more jumps of the ramification filtration?
			By construction $\ker \rho_{c_1}=G_1(P)$ and $\ker \rho_{c_1}$ has at least $n$ lower ramification jumps, since $n$ is the number of representation jumps and by (i) every representation jump gives rise to a lower ramification jump. 
			If the number of the ramification jumps was strictly greater than $n$, then some   of the Galois groups $\ker \rho_{c_i}/\ker \rho_{c_{i+1}}$ 
should have more than one lower ramification jump 
			which is  impossible from the computations done above.
		\end{proof}

		\begin{corollary}\label{sameorder}
The following  groups are equal:
			\[G_{b_{i}}=\ker \rho_{c_i} \mbox{ for all  } 1\leq i \leq \mu=n.\]
		\end{corollary}
		\begin{proof}
			We will prove  first that $\ker \rho_{r-1} \subset G_{b_\mu}$.
			But $b_\mu=m_r$, thus
			for an  element $\sigma\in \ker \rho_{r-1}$ we have
			$\sigma(f_r)=f_r+ c(\sigma)$, with $c(\sigma)\in k^*$ so
			\[v_P(\sigma(t) -t)=m_r+1 = b_\mu+1 \Rightarrow \sigma \in G_{b_\mu}.\]
			Now we will prove that $\ker \rho_{r-1} \supset G_{b_\mu}$.

			Notice that every element in $G_{b_\mu}$ satisfies $v_P(\sigma(t)-t)=b_\mu+1=m_r+1$.
			Let $c_{i^*}$ be maximal
			such that $G_{b_\mu} \subset \ker \rho_{c_{i^*}}$. Then  by construction, there is an element $\sigma' \in G_{b_\mu}$
			that does not belong to $\ker\rho_{c_{i^*+1}}$, that is (using proposition \ref{action})
			\[
				\sigma'(f_{j})=f_{j} \mbox{ for all } j \leq c_{i^*}  \mbox{ and } \sigma'(f_{c_{i^*}+1})=f_{c_{i^*}+1}+\sigma'(c), \textrm{ for some } \sigma'(c)\in k^*.
			\]
			For a Galois group $G$ of a local field extension $L/K$ consider the function $i_{G}$ defined by $i_{G}(\sigma)=v_{L}(\sigma(t)-t)$, see \cite[chap IV, p.62]{SeL}.  We consider this function for the Galois extension  
			$\frac{\ker \rho_{c_{i^*}}
			 	}{
			 	\ker \rho_{c_{i^*+1}}}$:			
			\[
i_{\frac{\ker \rho_{c_{i^*}}
				}{
				\ker \rho_{c_{i^*+1}}}}
			\left(
			\sigma' \ker \rho_{c_{i^*+1}}
			\right)=-v_{Q_{i^*+1}}(f_{c_{i^*}+1})+1,
			\]
			using lemma \ref{HKT}.
			 On the other hand this value should be equal to  $b_\mu$. Notice, that
			since $G_{b_\mu}$ is elementary abelian with a unique jump, the lower and the  upper ramification filtrations coincide.
			So $m_r=b_\mu=-v_{Q_{i^*+1}}(f_{c_{i^*}+1})$. Thus  
			{$i^*=n$} and $c_{i^*}=c_n=r-1$. 
			This proves that $\ker \rho_{r-1} =G_{b_\mu}$, i.e. the last groups in both filtrations coincide.

			We now consider the HKG extension of the rational function field given by:
			\[F^{G_{b_\mu}}=k (X/{\ker \rho_{c_{n}}})=F^{G_1(P)}(\bar{f}_1,\ldots,\bar{f}_{n-1}).\]
			This extension, has ramification filtration
			\[
				\frac{G_1(P)}{G_{b_\mu}} \geq \cdots \geq \frac{G_{i}}{G_{b_\mu}} \geq  \cdots \geq \frac{G_{b_\mu-1}}{G_{b_\mu}} > \{1 \}.
			\]
			Indeed, we  know by \cite[Corollary on page 64]{SeL} that the ramification filtration
			of the quotient group $G/H$ when $H=G_j$ is a subgroup of the ramification filtration
			is given by $(G/H)_i=G_i/H$ for $i\leq j$ and $(G/H)_i=\{1\}$ for $i\geq j$.
			The representation filtration of $\frac{G_1(P)}{G_{b_\mu}}$ is formed by the quotients of the representation filtration of $\ker \rho_{c_1}$ by  $\ker \rho_{r-1}$. Using the previous argument we see
			that the last groups in both filtrations are equal and we proceed inductively  using theorem \ref{jumps}.
		\end{proof}

		We will now focus on the case where the first jump equals one:
		\begin{corollary}  \label{rational-jump}
			The condition $G_1(P)> G_2(P)$ is  equivalent to   ${F_2}$ being rational.
		\end{corollary}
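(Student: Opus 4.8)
The plan is to chain three equivalences, all pivoting on the prime-to-$p$ quantity $\lambda_1 := -v_{Q_2}(f_{c_1+1}) = m_{c_1+1}/|\ker \rho_{c_2}|$ attached to the first representation jump $c_1$. First I would rephrase the group-theoretic hypothesis purely in terms of ramification jumps. By our notational convention $G_1(P)=G_{b_1}$, the chain $G_1(P)=G_2(P)=\cdots=G_{b_1}$ is constant up to index $b_1$ and then drops; hence $G_1(P)>G_2(P)$ holds if and only if this first drop already occurs at index $1$, i.e. if and only if $b_1=1$.

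Next I would identify $b_1$ with $\lambda_1$. By theorem \ref{jumps} (equivalently theorem \ref{mainres}(\ref{c})) the ramification jumps of $G_1(P)$ are exactly the integers $-v_{Q_{i+1}}(f_{c_i+1})=\lambda_i$, listed in increasing order as $b_1<\cdots<b_n$ with $\lambda_i=b_i$; in particular the smallest ramification jump is $b_1=\lambda_1$. Therefore $b_1=1$ if and only if $\lambda_1=1$.

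Finally I would translate $\lambda_1=1$ into rationality via the Weierstrass semigroup at $Q_2$. By lemma \ref{semigroupup} (see also proposition \ref{weiersemi}) one has
\[
H(Q_2)=\left\langle\, \left|\tfrac{G_1(P)}{\ker \rho_{c_2}}\right|,\ \lambda_1 \,\right\rangle_{\Z_+},
\]
and because $c_1$ is a genuine representation jump we have $G_1(P)=\ker \rho_{c_1}\supsetneq \ker \rho_{c_2}$, so the first generator is a power of $p$ strictly larger than $1$. Consequently the smallest positive element of $H(Q_2)$ is $\min(\lambda_1,\,|G_1(P)/\ker \rho_{c_2}|)$, which equals $1$ precisely when $\lambda_1=1$. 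Since $Q_2$ is a rational point, $F^{\ker \rho_{c_2}}$ is rational if and only if $1\in H(Q_2)$; so $\lambda_1=1$ if and only if $F^{\ker \rho_{c_2}}$ is rational. Chaining the three equivalences yields the corollary.

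The only points requiring care are the strict inclusion $\ker \rho_{c_1}\supsetneq \ker \rho_{c_2}$, which guarantees that the $p$-power generator exceeds $1$ and so cannot itself force $1\in H(Q_2)$, and the standard fact that a curve carrying a rational point is rational exactly when its Weierstrass semigroup at that point contains $1$. Neither is a serious obstacle; the substance of the statement is already packaged in theorem \ref{jumps} and lemma \ref{semigroupup}, and the corollary is essentially a bookkeeping of the correspondence between the first ramification jump and the multiplicity of $H(Q_2)$.
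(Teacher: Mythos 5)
Your proof is correct, and your opening reduction $G_1(P)>G_2(P) \Leftrightarrow b_1=1 \Leftrightarrow \lambda_1=1$ via theorem \ref{jumps} is exactly the identification the paper needs (and uses only implicitly); where you genuinely diverge is in the final translation of $\lambda_1=1$ into rationality. The paper argues via a genus computation: it notes (again by theorem \ref{jumps}) that $G_1(P)/\ker\rho_{c_2}$ is elementary abelian of order $q:=[G_1(P):\ker\rho_{c_2}]$ with a \emph{unique} lower jump $\upsilon=\lambda_1$, and applies Riemann--Hurwitz to the extension $F^{\ker\rho_{c_2}}/F^{G_1(P)}$, ramified only over one point, to obtain $2g_{F^{\ker\rho_{c_2}}}-2=-2q+(\upsilon+1)(q-1)$, i.e.\ $g_{F^{\ker\rho_{c_2}}}=(\upsilon-1)(q-1)/2$, which vanishes precisely when $\upsilon=1$. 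You instead argue combinatorially: by lemma \ref{semigroupup} one has $H(Q_2)=\langle q,\lambda_1\rangle_{\Z_+}$ with $q>1$ a $p$-power (using the strictness $\ker\rho_{c_1}\supsetneq\ker\rho_{c_2}$, which you rightly flag), so $1\in H(Q_2)$ iff $\lambda_1=1$, and over an algebraically closed field the function field is rational iff $1$ lies in the Weierstrass semigroup at a point, since a function with a single simple pole gives a degree-one map to $\mathbb{P}^1$. Both routes rest on the same key input, namely theorem \ref{jumps}'s identification of the unique jump of the first stage of the tower with $\lambda_1=b_1$; the paper's Riemann--Hurwitz step buys a quantitative formula $g_{F^{\ker\rho_{c_2}}}=(\upsilon-1)(q-1)/2$ for the intermediate genus (not just the vanishing criterion), while your semigroup argument is more elementary --- no genus computation at all once lemma \ref{semigroupup} is available --- and has the merit of spelling out the equivalence $G_1(P)>G_2(P)\Leftrightarrow\upsilon=1$ that the paper's terse proof leaves to the reader. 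Both of the caveats you single out (the strict inclusion forcing the $p$-power generator to exceed $1$, and the rationality criterion $1\in H(Q_2)$) are sound, so the proposal stands as a complete alternative proof.
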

		\begin{proof}
			Let $[G_1(P):\ker\rho_{c_2}]=:q$. The group $G_1(P)/\ker \rho_{c_2}$
			is elementary abelian of order $q$ with a unique jump, say at $\upsilon$. The Riemann--Hurwitz
			theorem implies:
			\[
				2g_{F_2} -2=-2q+(\upsilon+1)(q-1)
			\]
			and $\upsilon=1$ if and only if  $g_{F_2}=0$.
		\end{proof}

		\begin{corollary}\label{case1jumps}
			Suppose that $G_1(P)> G_2(P)$.  Let $i_0$ be the index such that $-v_P(f_{i_0})=m_{i_0}=|G_1(P)|$ and
			$k(f_{i_0})=F^{G_1(P)}$ as  given in lemma \ref{fields}.
			Concerning the structure of the Weierstrass semigroups $H(Q_{i+1})$ given in proposition \ref{weiersemi} we have
			\[
				H(Q_{i+1})=\left\langle \bar{m}_jp^{-h_j} : 1\leq j \leq i \right\rangle_{\mathbb{Z}_+},
			\]
			while
			\[
				H(P)=\langle \bar{m}_{j}: 1\leq j \leq n\rangle_{\mathbb{Z}_+}.
			\]
			More precisely, $|G_2(P)|=m_1$, i.e. the order of the second lower ramification group
			equals the first pole number and
			\[ m_r =m_{r-1}+1.\]
		\end{corollary}
		\begin{proof}
			The element  $f_{i_0}$   is not needed for the
			generation of $F_j=F^{\ker \rho_{c_{j}}}$ for every $j>1$, that is
			$\left\langle \bar{m}_{j}p^{-h_i}  \right\rangle_{\mathbb{Z}_+} \ni \left|\frac{G_1(P)}{\ker \rho_{c_{i+1}}}\right|$.
			Indeed,
			from corollary \ref{rational-jump}
			we have  $F^{G_1(P)}(\bar{f}_1)=F_2$ is rational. 
			The element $f_{i_0}$
			is  a rational function {of}  $\bar{f}_1$.
			Moreover in this case,  we can normalize the Artin--Schreier generator $\bar{f}_1$
			for the  elementary abelian extension
			with a unique ramification jump, and  apply \cite[Proposition III.7.10]{StiBo} such that
			\[
				f_{i_0}=\bar{f}_1^q -\bar{f}_1,
			\]
			where $q$ equals  $[G_1(P):\ker\rho_{c_2}]$.

			Corollary \ref{g0} implies that   $|G_1(P)|$
			can result as a pole number as a multiple  of $|\ker \rho_{c_2}|$, which is a pole number since $F^{\ker \rho_{c_2}}={
				{F}_2={F}_1(\bar{f_1})}$  is rational. Moreover,
			from corollary \ref{sameorder} we have that $|G_2(P)|=|\ker \rho_{c_2}|$, while
			$|\ker \rho_{c_2}|=\bar{m}_1$ and thus
			\[
				\left| \frac{G_1(P)}{\ker \rho_{c_{i+1}}}\right| \in  \left\langle \frac{\bar{m}_1}{|\ker \rho_{c_{i+1}}|}\right\rangle_{\mathbb{Z}_+}, \textrm{ for every }
				1\leq i \leq n.
			\]
			Notice that in this case $\bar{m}_{1}=m_1$,  and that the first non zero pole number is always a {minimal} generator.

			Finally the last assertion about $m_r$ comes directly from proposition \ref{gap-comp}.
		\end{proof}
		
		At this point, we would like to discuss the case where $|G_1(P)|$ is  a generator of the semigroup.
		It turns out that this happens if and only if $1$ is  \textit{not}  a ramification jump, i.e. $G_1(P)=G_2(P)$.
		We have seen that the {minimal} generators of the semigroup $H(P)$ are of two types:
		\begin{enumerate}
			\item  they are induced by jumps of the representation filtration
			\item $|G_1(P)|$.
		\end{enumerate}
		{We will need the following}
			\begin{lemma} \label{Smith}
				Assume that $S$ is a numerical semigroup and $E$ is the semigroup such that $E=p^\ell S+N{\mathbb{Z}_+}$, for some $\ell\in  \mathbb{N}$, where
				$(N,p)=1$. Suppose further that
				the semigroups $S$, $E$ have the same cardinality of minimal generators. Then $N$ is a generator of the
				semigroup $S$.
			\end{lemma}
			\begin{proof}
				This is  \cite[proposition A.0.15]{fractionsemi} in the PhD thesis of  H. Smith. Notice that there the result is proved only for
				$p^\ell=p$, but the same proof can be used for the more general case of higher values of $\ell$.
			\end{proof}
		\begin{proposition} \label{gen-chara}
			The number $|G_1(P)|$ is a {minimal} generator of the Weierstrass semigroup at $P$ if and only if $G_1(P)=G_2(P)$.
		\end{proposition}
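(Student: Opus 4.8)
The plan is to reduce the statement to a concrete membership question for numerical semigroups and then dispatch the two directions separately. By proposition \ref{weiersemi} we have $H(P)=\langle m_{c_1+1},\dots,m_{c_n+1},|G_1(P)|\rangle$, and by proposition \ref{jumpsrho-gen} each $m_{c_j+1}=p^{h_j}\lambda_j$ is already a minimal generator; hence the only thing in question is whether $|G_1(P)|=p^{h_0}$ is redundant. Thus the assertion is equivalent to: $p^{h_0}\in S:=\langle m_{c_1+1},\dots,m_{c_n+1}\rangle$ if and only if $G_1(P)>G_2(P)$. Moreover, by corollary \ref{rational-jump} and the description $H(Q_2)=\langle p^{h_0-h_1},\lambda_1\rangle$ of lemma \ref{semigroupup} (which is $\mathbb{Z}_+$ exactly when $\lambda_1=1$, since $p^{h_0-h_1}>1$), we have $G_1(P)>G_2(P)\iff\lambda_1=1$. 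So it suffices to prove $p^{h_0}\in S\iff\lambda_1=1$.

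The direction $\lambda_1=1\Rightarrow p^{h_0}\in S$ is immediate from corollary \ref{case1jumps}: when $G_1(P)>G_2(P)$ one has $m_{c_1+1}=m_1=|G_2(P)|=p^{h_1}$, so that $p^{h_0}=[G_1(P):G_2(P)]\,m_{c_1+1}\in S$, and $|G_1(P)|$ is not a minimal generator.

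For the converse, $\lambda_1\geq 2\Rightarrow p^{h_0}\notin S$, I would induct on the height $n$ of the representation tower. In the base case $n=1$ we have $S=\langle m_r\rangle$ with $m_r=\lambda_1$ prime to $p$, so $p^{h_0}\in\langle m_r\rangle$ would force $m_r\mid p^{h_0}$, i.e. $m_r=1$, contradicting $\lambda_1\geq 2$. For the inductive step I would pass to the HKG subcover $F^{\ker\rho_{c_n}}/F^{G_1(P)}$, whose representation tower has height $n-1$; since $\ker\rho_{c_n}=G_{b_n}$ is a member of the ramification filtration, the quotient $G_1(P)/\ker\rho_{c_n}$ has the same first ramification jump $\lambda_1$ (lower indices pass to quotients well, \cite[Cor. p.\,64]{SeL}, exactly as in corollary \ref{sameorder}). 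The induction hypothesis then gives that $|G_1(P)/\ker\rho_{c_n}|=p^{h_0-h_{n-1}}$ is a minimal generator of $H(Q_n)$. Finally, lemma \ref{semigroupup} at the top step yields $H(P)=p^{h_{n-1}}H(Q_n)+m_r\mathbb{Z}_+$, and one must transfer minimality of $p^{h_0-h_{n-1}}$ in $H(Q_n)$ to minimality of $p^{h_0}=p^{h_{n-1}}\cdot p^{h_0-h_{n-1}}$ in $H(P)$.

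I expect this transfer to be the main obstacle. Writing a hypothetical decomposition $p^{h_0}=x+y$ with $x,y\in H(P)$ positive as $x=p^{h_{n-1}}a+m_r s$ and $y=p^{h_{n-1}}b+m_r t$ (with $a,b\in H(Q_n)$, $s,t\geq 0$), reduction modulo $p^{h_{n-1}}$ together with $(m_r,p)=1$ forces $p^{h_{n-1}}\mid(s+t)$; write $s+t=p^{h_{n-1}}w$. The constraint $m_r(s+t)\leq p^{h_0}$, combined with the ordering $p^{h_{n-1}}\lambda_{n-1}<m_r$, bounds $w$, and the equation collapses to $p^{h_0-h_{n-1}}=(a+b)+m_r w$. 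The case $w=0$ gives $p^{h_0-h_{n-1}}=a+b$ with $a,b>0$ in $H(Q_n)$, directly contradicting minimality downstairs; the delicate part is showing that no admissible $w\geq 1$ survives, i.e. that $p^{h_0-h_{n-1}}-m_r w$ can never be realized as a sum of two elements of $H(Q_n)$ compatibly with $x,y>0$. This bookkeeping—where the interplay between the size ordering of the generators and the coprimality of $m_r$ to $p$ is essential—is the crux; everything else is formal.
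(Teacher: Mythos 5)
Your reduction, and the easy direction $G_1(P)>G_2(P)\Rightarrow |G_1(P)|\in\langle m_{c_1+1},\dots,m_{c_n+1}\rangle_{\Z_+}$, are correct and agree with the paper (both rest on corollary \ref{case1jumps}). But the converse as you present it has a genuine gap, sitting exactly where you flag it: ruling out $w\geq 1$ in the collapsed equation $p^{h_0-h_{n-1}}=(a+b)+m_rw$. The mechanism you propose --- bounding $w$ via $m_r(s+t)\leq p^{h_0}$ together with the ordering of the generators --- cannot work in general: the ordering controls $m_r$ against $p^{h_{n-1}}\lambda_{n-1}$, not against $p^{h_0-h_{n-1}}$, and nothing forbids $m_r<p^{h_0-h_{n-1}}$, in which case $w=1$ survives all your size and coprimality constraints. (At the level of semigroups: with $H(Q_2)=\langle p^2,\lambda_1\rangle$, $p=7$, $\lambda_1=3$, a value $m_r=27<49=p^{h_0-h_1}$ passes every constraint you impose, and $w=1$ must then be excluded by a semigroup argument, not by size.) So as submitted the proposal is incomplete at its crux.

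The missing ingredient is the telescopic property $m_r=\lambda_n\in H(Q_n)$, which the paper extracts from lemma \ref{up-down} via the norm map (see remark \ref{telescopic}). With it the case $w\geq 1$ closes in two lines: since $m_r\in H(Q_n)$, write $p^{h_0-h_{n-1}}=\bigl(a+b+m_r(w-1)\bigr)+m_r$, a sum of two elements of $H(Q_n)$ with the second nonzero; minimality of $p^{h_0-h_{n-1}}$ (your induction hypothesis) forces $a=b=0$ and $w=1$, whence $p^{h_0-h_{n-1}}=m_r$, impossible since $(m_r,p)=1$ and $m_r>1$ by proposition \ref{fai-rep-lemma}. (Note the constraint $x,y>0$ is only needed in your $w=0$ case.) Once repaired, your induction gives a proof genuinely different from the paper's: the paper instead locates the first index $\nu_0$ at which $|G_1(P)|/|\ker\rho_{c_{\nu_0+1}}|$ fails to be a generator, observes that $H(Q_{\nu_0})$ and $H(Q_{\nu_0+1})$ then have equally many minimal generators, and invokes lemma \ref{Smith} (from \cite{fractionsemi}) to force the prime-to-$p$ generator $N=m_{c_{\nu_0}+1}/|\ker\rho_{c_{\nu_0+1}}|$ to equal $1$, so that $\nu_0=1$, $F^{\ker\rho_{c_2}}$ is rational, and corollary \ref{rational-jump} gives $G_1(P)>G_2(P)$. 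Your route trades that external counting lemma for an elementary induction plus the telescopic membership --- a fair trade, but the membership $m_r\in H(Q_n)$ is precisely the idea your write-up lacks.
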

		\begin{proof}
			If $G_1(P) > G_2(P)$, $F^{G_2(P)}$ is rational, $|G_2(P)|$ equals the first pole number from corollary \ref{case1jumps} and since
			$|G_2(P)|$ divides $|G_1(P)|$, $|G_1(P)|$  cannot be  a minimal
			   generator.

			For the other direction, assume that  $|G_1(P)|$ is not a  minimal generator, then we will prove that  $G_1(P)> G_2(P)$.
			By our hypothesis,  there is a semigroup $H(Q_{i})$ where $|G_1(P)|/| \ker \rho_{c_{i}}|$
			is not a generator  for some $c_i<r$. Let $\nu^*$ be the first index such that $|G_1(P)|/| \ker \rho_{c_{i}}|$ is
			a generator for $i\leq \nu^*$ and $|G_1(P)|/| \ker \rho_{c_{\nu^*+1}}|$ is not  a generator for
			$H(Q_{\nu^*+1})$.
			We have the following generating sets for the semigroups:
			\[
				H(Q_{\nu^*})=\left\langle  \left| \frac{G_1(P)}{\ker \rho_{c_{\nu^*}}}\right|,\frac{\bar{m}_{j}}{|\ker \rho_{c_{\nu^*}}|}: 1\leq j < \nu^* \right\rangle_{\mathbb{Z}_+},
			\]
			\[
				H(Q_{\nu^*+1})=\left\langle  \frac{\bar{m}_j}{|\ker \rho_{c_{\nu^*+1}}|}: 1\leq j \leq \nu^* \right\rangle_{\mathbb{Z}_+},
			\]
			i.e. both semigroups have the same number of generators.
			According to lemma \ref{semigroupup}  the semigroup   $H(Q_{\nu^*+1})$ is generated by elements of the semigroup  $H(Q_{\nu^*})$ multiplied by
			$[\ker \rho_{c_{\nu^*}} : \ker \rho_{c_{\nu^*+1}}]$ and an extra {prime to $p$} generator
			$\frac{\bar{m}_{\nu^*}}{|\ker \rho_{c_{\nu^*+1}}|}$, i.e.:
			\[
				H(Q_{\nu^*+1})=[\ker \rho_{c_{\nu^*}}:\ker \rho_{c_{\nu^*+1}}]\cdot H(Q_{\nu^*}) +\Z_+
				\frac{\bar{m}_{\nu^*}}{|\ker \rho_{c_{\nu^*+1}}|}.
			\]
			We will now complete the proof, by applying lemma \ref{Smith}.
			The prime to $p$ generator $N=\frac{\bar{m}_{\nu^*}}{|\ker \rho_{c_{\nu^*+1}}|}$ should be a generator
			of $H(Q_{\nu^*})$ but it cannot be any of the $\frac{\bar{m}_j}{|\ker \rho_{c_{\nu^*}}|}: 1\leq j < \nu^*$
			since it is the greatest of these, so the only remaining case is $N=\left|\frac{G_1(P)}{\ker \rho_{c_{\nu^*}}}\right|$,
			but since $N$ is prime to $p$ we have $|G_1(P)|= |\ker \rho_{c_{\nu^*}}|$,  $N=1$ and thus $\nu^*=1$ and
			$H(Q_1)=H(Q_2)=\Z_+$,
			{but this} contradicts the non-rationality of the field $F^{G_2(P)}$.
		\end{proof}
		\begin{remark}
			For HKG-covers the field $F^{G_2(P)}$ is always rational, see \cite[Theorem 11.78 (iii)]{hirschfeld2008algebraic}.
		\end{remark}
		\begin{remark}[Upper ramification jumps]
			The reader should notice that by computing the jumps of the lower ramification filtration
			we gain information on the jumps of the \textit{upper} ramification filtration through the Herbrand's formula,
			see \cite[section IV]{SeL}. As an application of this we get that, for $p$-groups, upper and lower ramification jumps are connected by
			the following formula:
			\[
				b_i=\sum_{j=1}^i (u_j-u_{j-1}) p^{h_0 -h_{j-1}} , \textrm{ for every } 1\leq i \leq n,
			\]
			where $u_1,\ldots,u_n$ are the upper jumps of $G_1(P)$ and here $b_0=u_0=0.$
	\end{remark}

\section{Applications}
\label{apply}
\subsection{Big actions}
\label{bigactionsap}

		A case where the order of $G_1(P)$ is not a generator of $H(P)$, due to Proposition \ref{gen-chara}, is when we focus on {\em big actions} as this { notion} is   defined in the
		work of C. Lehr, M. Matignon \cite{CL-MM} and studied further by M. Rocher and M. Matignon
		\cite{Ma-Ro}, \cite{Ro}.

	\begin{definition}
		A curve $X$ together with a subgroup $G$ of  the automorphism group of $X$
		is called a big action if $G$ is a $p$-group and
		\[
			\frac{|G|}{g_X} > \frac{2p }{p-1}.
		\]
	\end{definition}

		All big actions have the following property 
		\begin{proposition}\cite[prop. 8.5]{CL-MM}: \label{28}
			Assume that $(X,G)$ is a big action.
			There is a unique  point $P$ of $X$ such that $G_1(P)=G$, the group $G_2(P)$ is not trivial and strictly contained in $G_1(P)$ and the quotient $X/G_2(P)\cong \mathbb{P}^1$. Moreover, the
			group $G$ is an extension of groups
			\[
				0 \rightarrow G_2(P) \rightarrow G=G_1(P) \stackrel{\pi}\longrightarrow (\Z/p\Z)^v \rightarrow 0.
			\]
		\end{proposition}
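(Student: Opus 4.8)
The plan is to recover this statement of Lehr--Matignon from the Riemann--Hurwitz formula together with the big action inequality, using the tools already available in the excerpt. First I would record that a big action has zero $p$-rank, which is the input cited from \cite{CL-MM}. Then, exactly as in the implication $(\ref{direct})\Rightarrow(\ref{inverse})$ of theorem \ref{zeroprankGK}, \cite[Lemma 11.129]{Korchm} shows that every element of order $p$ fixes a single point; hence $G$ is the stabilizer of a unique point $P$, one has $G=G_1(P)$, and $P$ is the only ramified point of $X\to X/G$, where it is totally ramified. To see that $X/G\cong\mathbb{P}^1$, I would apply Riemann--Hurwitz to this cover: if $g_{X/G}\geq 1$ then, using $G_0(P)=G_1(P)=G$,
\[
2g-2=|G|(2g_{X/G}-2)+\sum_{i\geq 0}\big(|G_i(P)|-1\big)\geq \sum_{i\geq 0}\big(|G_i(P)|-1\big)\geq 2(|G|-1),
\]
which forces $g\geq |G|$, i.e. $|G|/g\leq 1<2p/(p-1)$, contradicting the big action hypothesis. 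Thus $g_{X/G}=0$, and substituting back the same formula collapses to the key identity $2g=\sum_{i\geq 2}(|G_i(P)|-1)$.

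From this identity I would read off the two assertions on $G_2(P)$. If $G_2(P)=\{1\}$ every higher term vanishes and $g=0$, contradicting $g\geq 2$; hence $G_2(P)\neq\{1\}$. If instead $G_2(P)=G_1(P)$, the first ramification jump is some $m\geq 2$, so $G_1(P)=\cdots=G_m(P)=G$ and the identity gives $2g\geq (m-1)(|G|-1)\geq |G|-1$. Then
\[
\frac{|G|}{g}\leq \frac{2|G|}{|G|-1}\leq \frac{2p}{p-1},
\]
the last inequality being equivalent to $|G|\geq p$, which holds; this contradicts the strict inequality $|G|/g>2p/(p-1)$. Therefore $\{1\}\neq G_2(P)\subsetneq G_1(P)$.

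It remains to identify $G/G_2(P)$ and the quotient $X/G_2(P)$. Since $G_2(P)$ is a member of the ramification filtration, the lower numbering descends to $\bar G:=G/G_2(P)$ by \cite[Corollary p. 64]{SeL}, giving $\bar G_1=G_1(P)/G_2(P)=\bar G$ and $\bar G_2=\{1\}$, so $\bar G$ carries a single ramification jump, at $1$. As the first ramification quotient $G_1/G_2$ embeds into the additive group $(k,+)$, it is elementary abelian, whence $\bar G\cong(\Z/p\Z)^v$, the asserted extension. Finally $\bar G$ acts on $X/G_2(P)$ with quotient $X/G=\mathbb{P}^1$ and a unique jump at $1$, so Riemann--Hurwitz (as in corollary \ref{rational-jump}) yields $g_{X/G_2(P)}=0$, i.e. $X/G_2(P)\cong\mathbb{P}^1$. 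The step I expect to be most delicate is this last one: the equality $\bar G_i=G_i(P)/G_2(P)$ is valid only because $G_2(P)$ is itself a ramification group, since lower numbering is in general not compatible with quotients, so I would invoke \cite[Corollary p. 64]{SeL} precisely and couple it with the standard fact that $G_1/G_2\hookrightarrow (k,+)$ to conclude elementary abelianness.
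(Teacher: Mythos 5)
Your proof is correct, but there is nothing in the paper to compare it against step by step: proposition \ref{28} is stated with a bare citation to Lehr--Matignon \cite{CL-MM}, and the paper gives no proof of its own. What you have produced is a self-contained reconstruction, essentially along the lines of the original argument, built from tools the paper quotes elsewhere. Your inputs are legitimate and non-circular: zero $p$-rank of big actions is imported exactly as the paper imports it (it rests on Nakajima-type bounds, independent of the statement being proved); the passage from zero $p$-rank to a unique totally ramified point is the same use of \cite[Lemma 11.129]{Korchm} as in theorem \ref{zeroprankGK} --- note that, like the paper, you gloss the step from ``each order-$p$ element fixes exactly one point'' to ``the whole $p$-group fixes a common point'', which needs the standard argument via a central element of order $p$, but this matches the paper's own level of detail. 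Your Riemann--Hurwitz bookkeeping is sound: the elimination of $g_{X/G}\geq 1$ is fine, the key identity $2g=\sum_{i\geq 2}\bigl(|G_i(P)|-1\bigr)$ follows correctly once $g_{X/G}=0$, and it cleanly yields both $G_2(P)\neq\{1\}$ (else $g=0$) and $G_2(P)\subsetneq G_1(P)$, where your equivalence $2|G|/(|G|-1)\leq 2p/(p-1)\iff |G|\geq p$ checks out. You also correctly isolate the one genuinely delicate point: lower numbering descends to quotients only when the kernel is itself a ramification subgroup, and \cite[Corollary on p.~64]{SeL} --- the same corollary the paper invokes in the proof of corollary \ref{sameorder} --- applies precisely because you quotient by $G_2(P)$; after that, $G_1(P)/G_2(P)\hookrightarrow (k,+)$ gives the elementary abelian quotient $(\Z/p\Z)^v$, and the Riemann--Hurwitz computation in the style of corollary \ref{rational-jump} (elementary abelian, unique jump at $1$) gives $g_{X/G_2(P)}=0$. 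Compared with the paper's choice to outsource the statement, your route buys a proof internal to the paper's toolkit (corollary \ref{g0}, theorem \ref{zeroprankGK}, corollary \ref{rational-jump}), at the modest cost of taking the zero $p$-rank of big actions as a black box.
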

		The first jump for their ramification filtration {is equal to} 1, while the other {jumps} are given by theorem \ref{jumps}. Moreover, we are now able able to compute explicitly the Weierstrass semigroup at the ramified point.
			\begin{corollary}\label{bigactions}
				If $(X,G)$ is a big action, then
				$|G_1(P)|$ is not a minimal generator of $H(P)$. Moreover
				\[
					H(P)=\langle \bar{m}_{j}: 1\leq j \leq n\rangle_{\mathbb{Z}_+},\; |G_2(P)|=m_1 \textrm{ and } m_r =m_{r-1}+1,
				\]
				i.e. the structure of $H(P)$ is given by corollary \ref{case1jumps}.
			\end{corollary}

\subsection{Curves with zero $p$-rank}
\label{zeroprankapp}
The $p$-rank of the Jacobian is an important invariant of an algebraic curve, which also controls the automorphism group of the curve, see \cite{Nak}. 
The case of zero $p$-rank curves corresponds to curves $X$ with a huge number of automorphisms
\cite[Theorem 1 (iv)]{Nak}. In this class of curves, the  most
		automorphisms occur exactly when $ X/G_1(P)$  is rational. Otherwise $|G_1(P)|$
		is less than or equal to the genus of the curve see \cite[Theorem 11.78 (i)]{hirschfeld2008algebraic}. This is exactly the HKG $p$-case.

\begin{theorem}\label{zeroprankGK}
			The following conditions are equivalent, for a $p$-group $G\subseteq \A(X)$: 
			\begin{enumerate}
				\item\label{direct} The curve $X$ has zero $p$-rank and $|G|$ is a pole number at the unique  point $P\in X$ that $G$ stabilizes.
				\item\label{inverse} The cover $X\rightarrow X/G$ is a  HKG-cover.
			\end{enumerate}
		\end{theorem}

		\begin{proof}
			$\ref{direct} \Rightarrow \ref{inverse}.$ By \cite[Lemma 11.129]{hirschfeld2008algebraic}
			every element of order $p$ fixes exactly one point. This means that $G=G(P)$, i.e. $G$ can be realized as the stabilizer of a point $P\in X$
			and that
			for the cover $X\rightarrow X/G(P)$, $P$ is the unique totally ramified point.
			By corollary \ref{g0},  $|G|=|G(P)|$ is a pole number at $P$ if and only if $X/G(P)$ is a rational curve.

			$\ref{inverse} \Rightarrow \ref{direct}.$ Use the Deuring--Shafarevich formula \cite[eq. (1.1)]{Nak:85} and the definition of a HKG $p$-cover.
		\end{proof}

%
%
%
%
%

\subsection{Hasse-Arf divisibility conditions}
\label{HasseArfapp}

 The Hasse--Arf theorem for abelian groups gives certain divisibility conditions for the jumps 
 of the ramification filtration. Using theorem \ref{mainres} restricted to the case of  an abelian group $G_1(P)$,
 these divisibility conditions can  be interpreted in terms of the Weierstrass semigroup at $P$:

\begin{corollary}[Hasse--Arf theorem]
Assume that a HKG-cover has abelian Galois $p$-group $G_1(P)$. Let $p^{r_i}=[G_{b_{i}}:G_{b_{i+1}}]$ for all $1\leq i \leq n-1$. Then the generators of the Weierstrass semigroup 
that result from the jumps of the representation filtration satisfy:
 \[
\frac{{\bar{m}_{i+1}}}{|G_{b_{i+2}}|} \equiv \frac{{\bar{m}_{i}}}{|G_{b_{i+1}}|}  \mod \; p^{\sum_{j=1}^{i}r_j}
 \]
 or
 \[
  \left |\frac{G_{b_{i+1}}}{G_{b_{i+2}}}\right|  {\bar{m}_{i+1}} \equiv {\bar{m}_{i}} \mod \left| G_{b_1}\right |.
 \]
\end{corollary}
\begin{proof}
We will use an equivalent form of the Hasse--Arf theorem, see \cite{rouketas}: the condition for the upper jumps $u_i$ to be integers
can be directly translated to congruences for the lower ramification jumps.
Namely, every two subsequent lower ramification jumps $b_{i+1}, b_i$  must satisfy:
\[
b_{i+1}\equiv b_i \mod p^{\sum_{j=1}^{i}r_j}, \textrm{ where } p^{r_i}:=[G_{b_{i}} : G_{b_{i+1}}], \textrm{ for every } 1\leq i \leq n-1.
 \]
 Now replace $b_i$ with $\frac{{\bar{m}_{i}}}{|G_{b_{i+1}}|}$  for every $1\leq i \leq n$ in order to derive the desired result.
\end{proof}

%
\section{Holomorphic polydifferentials}
\label{represent-KG}
In what follows $X$ is always a HKG-cover with Galois group a $p$-group.
			We can construct a basis for the $m$-holomorphic polydifferentials of $X$ as
			follows:

			Let $f_{i_0}$ be the function generating the rational field $F^{G_1(P)}=k(f_{i_0})$. The function
			$f_{i_0}$ can be selected so that it has a simple  unique pole  at infinity which is the restriction of the place $P$ to $k(f_{i_0})$.
			Let $p^{h_0}=|G_1(P)|$.
			We observe first that
			\begin{equation} \label{deg-cont}
				\mathrm{div} (df_{i_0}^{\otimes m})= \left( -2 m p^{h_0} + m\sum_{i= 1}^n (b_i -b_{i-1})(p^{h_{i-1}}-1) \right) P,
			\end{equation}
			where
			\[
				b_0=-1,\; p^{h_0}=|G_1(P)|, \; p^{h_i}=|\ker \rho_{c_{i+1}}|=| G_{b_{i+1}}|, \textrm{ for } i\geq 1.
			\]
			The right hand side of eq. (\ref{deg-cont})  equals  $m (2g_X-2)P$ by the Riemann--Hurwitz formula.
			\begin{proposition} \label{deg-char}
				For every pole number $\mu$, we select a function $f_\mu$ such that $(f_\mu)_\infty=\mu P$.
				The set $\{f_\mu df_{i_0}^{\otimes m}: \deg\mathrm{div}(f_i)\leq  m(2g_X-2)\}$ is a basis for the
				space of $m$-holomorphic (poly)differentials of $X$, for every positive integer $m\geq1$.
			\end{proposition}
			\begin{proof}
				All $m$-holomorphic differentials are of the form $ g df_{i_0}^{\otimes m}$. Therefore, the
				condition for being holomorphic is translated into the condition
				$g \in L( m (2g_X-2)P)$.
				This means that the linear independent elements $f_i df_{i_0}^{\otimes m}$ with $\deg \mathrm{div} f_i=m_i \leq m(2g_X-2)$
				are holomorphic. In order to see that all the holomorphic differentials are of this form, we will count them:

				\textit{Case $m=1$.} Notice that $\ell((2g_X-2)P)=g_X$.
				On the  other hand $\ell((2g_X-1)P)=g_X$ from the Weierstrass gap theorem \cite[I.6.7]{StiBo}. This means
				that in the interval $[0,2g_X-2]$ there are exactly $g_X$ pole numbers, equivalently $2g_X-1$ is a gap.

				\textit{Case $m>1$.} Similarly, observe {using the Riemann--Roch theorem}, that the space of $m$-holomorphic differentials has dimension
				\[
					\dim L(mW)=m(2g_X-2) + 1-g_X=(2m-1)g_X-2m+1.
				\]
				On the other hand the number of $f_i$ such that $\deg \mathrm{div}(f_i) \leq m (2g_X-2)$ can be computed
				as follows:

				In the interval $[0,2g_X-1]$ there are $g_X$ such elements
				and every number greater than $2g_X$ is a pole number using
				again the Riemann--Roch theorem. So
				in the interval $(2g_X-1, m(2g_X-2)]$ there are
				$m(2g_X-2)-(2g_X-1)=2mg_X -2m -2g_X+1$ elements. In total there are
				$ 2mg_X -2m-2g_X+1+g_X=(2m-1)g_X-2m+1$ and this coincides with the dimension of the space of $m$-holomorphic
				differentials.
			\end{proof}
			\begin{corollary}\label{symmetricWeier}
				The Weierstrass semigroup at $P$ is symmetric, i.e. $2g_X-1$ is a gap.
			\end{corollary}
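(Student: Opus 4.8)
The plan is to reduce the symmetry of $H(P)$ to the single assertion in the statement, namely that $2g_X-1$ is a gap, and then to read this off directly from the dimension count already carried out in Proposition \ref{deg-char}. Recall that a numerical semigroup with exactly $g_X$ gaps is symmetric precisely when its largest gap (its Frobenius number) equals $2g_X-1$. Since every integer $\geq 2g_X$ lies in $H(P)$, the largest gap is automatically at most $2g_X-1$, so it suffices to show that the value $2g_X-1$ is itself a gap; equivalently, that there are exactly $g_X$ pole numbers in the interval $[0,2g_X-2]$.

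First I would invoke the case $m=1$ of Proposition \ref{deg-char}: the differentials $f_\mu\, df_{i_0}$ with $\deg\mathrm{div}(f_\mu)=m_\mu\leq 2g_X-2$ form a basis of the space $H^0(X,\Omega_X)$ of holomorphic differentials. As these are linearly independent and span a space of dimension $g_X$, the pole numbers $m_\mu\leq 2g_X-2$ are in bijection with this basis, so there are exactly $g_X$ of them.

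To finish I would combine this with the standard Weierstrass count: the Riemann--Roch theorem gives $\ell\big((2g_X-2)P\big)=\ell\big((2g_X-1)P\big)=g_X$, so that $[0,2g_X-1]$ contains exactly $g_X$ pole numbers. Comparing the two counts forces $2g_X-1$ not to be a pole number, hence a gap, which is the required symmetry condition. There is no genuine obstacle here: the substantive content resides entirely in Proposition \ref{deg-char}, and the only point needing care is to match the chosen definition of a symmetric numerical semigroup with the equivalent statement that its Frobenius number equals $2g_X-1$.
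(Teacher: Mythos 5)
Your proposal is correct and is essentially the paper's own argument: the corollary is read off from the $m=1$ case of Proposition \ref{deg-char}, whose proof already establishes that $[0,2g_X-2]$ contains exactly $g_X$ pole numbers, equivalently that $2g_X-1$ is a gap. The only imprecision is attributing $\ell\big((2g_X-2)P\big)=g_X$ to Riemann--Roch alone (at degree $2g_X-2$ Riemann--Roch leaves the correction term $\ell\big(K-(2g_X-2)P\big)$ undetermined; the equality holds because $\mathrm{div}(df_{i_0})=(2g_X-2)P$ is canonical, or equivalently via your own Step 1), but this clause is redundant in your argument, which only needs $\ell\big((2g_X-1)P\big)=g_X$ from Riemann--Roch.
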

			We have proved in proposition  \ref{weiersemi} that  the elements $m_{c_i+1}$ for    $1\leq i \leq n$ together with the element
			$p^{h_0}$ generate the Weierstrass semigroup. A numerical semigroup $\Sigma$  that is not of the form
			$a \Z_+$ has a minimal element $\kappa(\Sigma)$ called {\em the conductor} such that  all integers
			$n\geq \kappa(\Sigma)$ are in the semigroup.

			Since the semigroup is symmetric we see that $\kappa(H(P))=2g_X$. Recall that $2g_X-1$ is a gap in this case
			and that the  Riemann--Roch theorem implies that all integers $\geq 2g_X$ are in $H(P)$.

We will now focus on the representation theory of HKG-covers.

			\begin{proposition}\label{KGmodule}
				Let $p^{h_0}=|G|=|G_1(P)|$.
				The module $\Omega_X^{\otimes m}$ is the direct sum of at most
				\[
					N:=\lf \frac{m(2g-2)}{p^{h_0}} \rf=-2m +\lf m \frac{ \sum_{i= 1}^n (b_i -b_{i-1})(p^{h_{i-1}}-1)}{p^{h_0}} \rf
				\]
				direct indecomposable summands.
			\end{proposition}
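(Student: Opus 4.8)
The plan is to control the number of Krull--Schmidt summands of the $k[G]$-module $M:=H^0(X,\Omega_X^{\otimes m})$, where $G=G_1(P)$, by a single invariant that is insensitive to the (possibly non-abelian) internal structure of $G$, namely $\dim_k M^{G}$. The structural input I would invoke is that $G$ is a $p$-group and $\mathrm{char}\,k=p$, so that $k[G]$ is a local ring whose unique simple module is the trivial module $k$; in particular every nonzero $k[G]$-module has a nonzero space of $G$-invariants. Hence if $M=\bigoplus_{i=1}^{s}M_i$ is a decomposition into indecomposable summands, then $M_i^{G}\neq 0$ for each $i$, and therefore $s\le\sum_{i=1}^{s}\dim_k M_i^{G}=\dim_k M^{G}$. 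Thus it suffices to prove $\dim_k M^{G}\le N$.

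The second step computes the invariants explicitly from the HKG picture. By proposition~\ref{deg-char} every holomorphic $m$-polydifferential has the form $g\,df_{i_0}^{\otimes m}$ with $g\in L\big(m(2g-2)P\big)$, and by eq.~(\ref{deg-cont}) the divisor of $df_{i_0}^{\otimes m}$ equals $m(2g-2)P$ and is supported only at $P$. Since $f_{i_0}\in F^{G_1(P)}$ is fixed by $G$, the differential $df_{i_0}^{\otimes m}$ is itself $G$-invariant, so $g\,df_{i_0}^{\otimes m}$ is invariant exactly when $g\in F^{G_1(P)}$. By remark~\ref{fields} the field $F^{G_1(P)}=k(f_{i_0})$ is rational, and $f_{i_0}$ has its only pole at $P$ with $-v_P(f_{i_0})=|G_1(P)|=p^{h_0}$ (corollary~\ref{case1jumps}). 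Consequently such a $g$ with a pole only at $P$ is a polynomial in $f_{i_0}$, and $M^{G}$ is spanned by the $m$-differentials $f_{i_0}^{\,j}\,df_{i_0}^{\otimes m}$ subject to $j\,p^{h_0}\le m(2g-2)$. Counting the admissible exponents $j$ then gives $\dim_k M^{G}\le\lf m(2g-2)/p^{h_0}\rf=N$, and substituting the Riemann--Hurwitz expression for $m(2g-2)$ coming from eq.~(\ref{deg-cont}) turns this into the second displayed form of $N$.

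Combining the two steps yields $s\le\dim_k M^{G}\le N$, the desired bound. The step I expect to be the crux is the first reduction: it is precisely the fact that $k[G]$ is local for a $p$-group in characteristic $p$ that allows a single dimension $\dim_k M^{G}$ to bound the number of indecomposable summands without any classification of the indecomposable $k[G]$-modules. This is what makes the estimate uniform for an arbitrary, possibly non-abelian $G$, in contrast to the cyclic case where one would simply read off the summands from the sizes of the Jordan blocks. The remaining work, the identification of $M^{G}$ with the powers of $f_{i_0}$, is then a direct consequence of the rationality of $F^{G_1(P)}$ and of the pole order $p^{h_0}$ of its uniformizer at $P$.
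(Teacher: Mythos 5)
Your proof is correct and follows the same two-step skeleton as the paper's own argument: bound the number of Krull--Schmidt summands of $M=H^0(X,\Omega_X^{\otimes m})\cong L(m(2g_X-2)P)$ by $\dim_k M^{G}$, then identify the invariants with the span of $f_{i_0}^{\,j}\,df_{i_0}^{\otimes m}$, $j\,p^{h_0}\le m(2g-2)$, using the rationality of $F^{G_1(P)}=k(f_{i_0})$. The one genuine difference lies in how the first step is justified. You invoke the standard fact that $k[G]$ is local for a $p$-group in characteristic $p$, so every nonzero $k[G]$-module has nonzero invariants; the paper instead proves this by hand in the present setting: inside each direct summand $W_i$ it constructs a basis whose elements have pairwise distinct pole orders at $P$ (adjusting leading Laurent coefficients to break ties), and then uses $v_P(\sigma f-f)>v_P(f)$ to conclude that $G$ acts by unipotent triangular matrices on $W_i$, hence fixes a vector. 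Your abstract route is shorter and is exactly the fact the paper itself quotes later, via \cite[15.6]{SerreLinear}, in the $p$-rank discussion; the paper's constructive variant buys a small structural byproduct, namely that every summand admits a filtration-compatible basis of distinct valuations, in the spirit of the valuation arguments used throughout the paper.

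One caveat, which you share verbatim with the paper: the count of admissible exponents $j\ge 0$ with $j\,p^{h_0}\le m(2g-2)$ is $\lf m(2g-2)/p^{h_0}\rf+1$, since the constant $j=0$ contributes the invariant differential $df_{i_0}^{\otimes m}$ itself; so the argument as written bounds the number of summands by $N+1$, not $N$. That $N+1$ is the intended reading is confirmed by corollary \ref{indpoly}, where the hypothesis $|G_1(P)|>m(2g-2)$, i.e.\ $N=0$, is taken to give indecomposability (one summand) rather than vanishing. Since your counting reproduces the paper's own, this off-by-one is inherited from the source and not a defect introduced by your proof.
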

			\begin{proof}
				We have  a representation of the group $G_1(P)$ in terms of lower diagonal matrices in
				$\Omega_X^{\otimes m}\cong L(m(2g_X-2)P)$. For an  element $f$ in  $L(m(2g_X-2)P)$
				we have the function $v_P:L(m(2g_X-2)P) \rightarrow \mathbb{N}$ sending
				$f$ to $-v_P(f)$ and $v_P(\sigma(f)-f)>v_P(f)$.

				Assume that the space $L(m(2g_X-2)P)$ admits a decomposition
				\[
					L(m(2g_X-2)P)= \bigoplus W_i
				\]
				as a direct sum of $G$-modules $W_i$.
				We will prove that we can find a basis of elements $e_1,\ldots e_{\dim W_i}$ of
				$W_i$ that have different valuations. {Indeed, s}tart from any basis of $W_i$. If there are
				two basis elements $a,b$ of $W_i$ such that $v_P(a)=v_P(b)$, then these are, locally at $P$, of the form
				\[
					a=a_1 \frac{1}{t^v}+ \mbox{higher order terms}, b= b_1 \frac{1}{t^v} + \mbox{ higher order terms}.
				\]
				Therefore there is an element $\lambda$ such that $a-\lambda b\neq 0$ has different
				valuation than $a,b$, ($\lambda=a_1/b_1$). We replace the element $b$ by the
				element $a-\lambda b$.  Proceeding this way we construct the desired
				basis  elements with different valuations. Now,
				\[\sigma(e_i)=e_i+b_i(\sigma), \mbox{ with } b_i(\sigma)=0 \textrm{ or } |v_P(b_i(\sigma))| < |v_P(e_i)|\]
				and this proves that every direct summand $W_i$ has an upper triangular representation
				matrix, so it contains at least one invariant element.

				Therefore,
				the number of indecomposable summands is smaller than  the number of $G_1(P)$-invariant elements.
				The space of invariant elements has  a basis of elements of the form $f_{i_0}^j$ such that  $-v_P(f_{i_0}^j) \leq m(2g_X-2)$, and the
				result follows.
			\end{proof}

			\begin{corollary}\label{indpoly}
				If $|G_1(P)|> m(2g_X-2)$ then the module $H^0(X,\Omega^{\otimes m})$ is indecomposable.  In particular  the space of holomorphic differentials $H^0(X,\Omega)$ is indecomposable for a curve $X$ that admits a big action.
			\end{corollary}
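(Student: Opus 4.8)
The plan is to deduce Corollary \ref{indpoly} directly from the preceding proposition, which bounds the number of indecomposable summands of $\Omega_X^{\otimes m}$ by
\[
N=\lf \frac{m(2g-2)}{p^{h_0}} \rf.
\]
First I would observe that the hypothesis $|G_1(P)|>m(2g-2)$ means exactly $p^{h_0}>m(2g-2)$, so the quotient $m(2g-2)/p^{h_0}$ is strictly less than $1$ and hence $N=\lf m(2g-2)/p^{h_0}\rf=0$ when $m(2g-2)<p^{h_0}$. A small care is needed here: the proposition counts the number of summands, which must be at least $1$ whenever the module is nonzero, so the correct reading is that the module is forced to be a single indecomposable summand. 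Concretely, the bound counts the $G_1(P)$-invariant basis elements $f_{i_0}^j$ with $-v_P(f_{i_0}^j)=j\,p^{h_0}\leq m(2g-2)$; under the hypothesis only $j=0$ survives, giving a one-dimensional invariant subspace, so there is exactly one invariant element up to scalars and therefore at most one indecomposable summand. This immediately yields indecomposability of $H^0(X,\Omega_X^{\otimes m})$.

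For the second assertion I would specialize to $m=1$ and to a curve admitting a big action. By Proposition \ref{28} a big action satisfies $|G|/g>2p/(p-1)$, and since $p\geq 5$ (we are in characteristic $p>3$) we have $2p/(p-1)>2$, whence $|G_1(P)|=|G|>2g>2g-2=1\cdot(2g-2)$. Thus the hypothesis $|G_1(P)|>m(2g-2)$ is met with $m=1$, and the first part applies to give that $H^0(X,\Omega_X)$ is indecomposable.

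I do not expect any serious obstacle, as this corollary is a clean numerical consequence of the summand bound. The only subtlety worth flagging is the boundary interpretation of the floor function: one should phrase the argument so that ``$N=0$'' is read as ``the module cannot split into more than one piece,'' i.e.\ it is indecomposable, rather than literally zero summands. A careful writer would note that the invariant subspace is genuinely one-dimensional (spanned by the constants, since $f_{i_0}^0=1$), which is what pins the count at exactly one and rules out a nontrivial decomposition.
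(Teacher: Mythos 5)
Your proof is correct and follows essentially the same route as the paper: the paper likewise argues that under the hypothesis $|G_1(P)|>m(2g-2)$ the only $G_1(P)$-invariant elements of $L(m(2g-2)P)$ are the constants, so the module contains a unique copy of the trivial (and only) irreducible $k[G_1(P)]$-representation and is therefore indecomposable, with the big-action case following directly from the definition $|G|/g>2p/(p-1)>2$. Your remark about reading the floor bound $N$ carefully at the boundary (so that ``$N=0$'' means one indecomposable summand rather than zero) is a sensible clarification of the preceding proposition, but it does not change the argument, which in both treatments rests on the one-dimensionality of the invariant subspace.
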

			\begin{proof}
				If $|G_1(P)|> m(2g_X-2)$ then the only $G_1(P)$ invariant elements belonging to $L(2m(g_X-1))$ are the constants.
				Thus this space includes a unique copy of the one dimensional irreducible representation, so is indecomposable. The assertion for curves
				admitting big action comes directly now from their definition.
			\end{proof}
			
 \def\cprime{$'$}
\providecommand{\bysame}{\leavevmode\hbox to3em{\hrulefill}\thinspace}
\providecommand{\MR}{\relax\ifhmode\unskip\space\fi MR }
\providecommand{\MRhref}[2]{%
  \href{http://www.ams.org/mathscinet-getitem?mr=#1}{#2}
}
\providecommand{\href}[2]{#2}

\end{document}